\documentclass[11pt]{amsart}

\usepackage{amsmath,amsthm,amsfonts,latexsym,epsfig}

\newtheorem{thm}{Theorem}[section]
\newtheorem{lem}[thm]{Lemma}
\newtheorem{pro}[thm]{Proposition}
\newtheorem{exa}[thm]{Example}

\newcommand{\al}{\alpha}
\newcommand{\be}{\beta}
\newcommand{\ga}{\gamma}
\newcommand{\si}{\sigma}

\newcommand{\ta}{\theta}
\newcommand{\Ta}{\Theta}

\newcommand{\la}{{\lambda}}
\newcommand{\Ga}{\Gamma}

\newcommand{\ep}{\epsilon}

\newcommand{\cP}{\mathcal{P}}

\newcommand{\cB}{\mathcal{B}}

\newcommand{\cM}{\mathcal{M}}
\newcommand{\fS}{\mathfrak{S}}

\newcommand{\bbZ}{\mathbb{Z}}

\newcommand{\del}{\partial}

\newcommand{\bfz}{{\bf z}}
\newcommand{\bfy}{{\bf y}}

\newcommand{\bfp}{{\bf p}}
\newcommand{\bfq}{{\bf q}}
\newcommand{\bfw}{{\bf w}}
\newcommand{\bfu}{{\bf u}}

\newcommand{\dmrjdel}[1]{}

\parskip=4pt


\title{Diagonal Solutions to the 2-Toda Hierarchy}
\author[S.~R.~Carrell]{S.~R.~Carrell$^*$}

\thanks{
${\hspace{-1ex}}^*$Department of Combinatorics and Optimization,
                                                University of Waterloo, Waterloo, Ontario, Canada.;    \\
${\hspace{.35cm}}$ \texttt{srcarrell@uwaterloo.ca}}

\begin{document}

\begin{abstract}
	Using a combinatorial description of the Bernstein operator and its action on Schur functions, we describe the formal power series solutions
	to a family of partial differential equations known as the 2-Toda hierarchy. We also characterize diagonal solutions and use this to prove
	that a special family of formal power series, called content-type series, are solutions to the 2-Toda hierarchy. As examples, we prove that various
	generating series for permutation factorization problems (including the double Hurwitz problem), correlators for the Schur measure on partitions
	and the formal character expansion of the HCIZ integral are all solutions to the 2-Toda hierarchy.
\end{abstract}

\maketitle

\section{Introduction}

Integrable hierarchies, certain families of partial differential equations, have found a wide range of applications in areas such
as random matrices, enumerative combinatorics and stochastic models. One of the prominent examples of this are the numerous
problems whose generating function is a solution to the 2-Toda hierarchy (and the related KP hierarchy). Curiously, most of these
series can be expressed as content-type series similar to those introduced in \cite{GJ1}.

The connection between integrable hierarchies and random matrix models has been known for some time and has been used to derive
properties of various statistical quantities of interest. Some examples of this include the results of Adler and Moerbeke \cite{AM3, AM2}
on the spectrum of random matrices as well as the results of Orlov, Yu and Shcherbin \cite{Orlov1, Orlov2, Orlov3, OS1} relating
various iterated integrals to the KP and 2-Toda hierarchies. Related to this is the Fredholm determinant approach to random matrix
models used, for example, by Tracy and Widom \cite{TW3, TW2, TW1}. Although the precise relationship between the Fredholm determinant
approach and the integrable hierarchies discussed here is not fully known, some progress has been made \cite{Ra1}.

One of the more well known applications of integrable hierarchies to enumerative combinatorics is Okounkov's result \cite{Ok1} that the
generating series for double Hurwitz numbers satisfies the 2-Toda hierarchy, thus settling a conjecture of Pandharipande \cite{Pand1} concerning
Gromov Witten theory of the sphere. This was then used by Kazarian \cite{Kaz1} in one of the
proofs of Witten's conjecture on the generating function of linear Hodge integrals.

Another application of integrable hierarchies to an enumerative problem is the result of Goulden and Jackson \cite{GJ1} that the number
of rooted triangulations (with respect to number of vertices and genus) satisfies a quadratic recurrence. Bender, Gao and Richmond \cite{BGR1}
then used this to derive a recurrence for the map asymptotics constant. A related result was obtained by Ercolani \cite{E1} using Riemann-Hilbert
techniques.

An example of the connection between various stochastic models and integrable hierachies is through the Schur measure introduced by
Okounkov \cite{Ok2} and its correlators. The Schur measure is a generalization of the $z$-measure introduced by Borodin and Olshanksi \cite{BO1, BO2, BO3}
in the context of asymptotic representation theory. In addition, it has been shown \cite{Bor1} that the $z$-measure encodes a number of stochastic
models considered, for example, by Johansson \cite{Jo1}.

In this paper we show that the general content-type series mentioned above are formal power series solutions to the 2-Toda hierarchy
(for a similar result coming from a different approach see \cite{Orlov1, Orlov2, Orlov3, OS1}). Moreover, we prove a partial converse.
That is, we show that if one has a formal power series solution to the 2-Toda hierarchy that also satisfies some constraints on
its coefficients then it can be written as a content-type series.

The outline of this paper is as follows. In section 2 we discuss notation and recall some results from previous work, \cite{CG1},
concerning the action of the Bernstein operator on Schur polynomials. We then describe the KP hierarchy and the 2-Toda hierarchy,
including a characterization of formal power series solutions with respect to their Schur polynomial coefficients.

In section 3 we first specialize the characterization of formal power series solutions of the 2-Toda hierarchy to diagonal
solutions. We then introduce the content-type series and, using the specialized characterization, we prove our first main
result that the content-type series are solutions to the 2-Toda hierarchy. We finish the section with our second main
result, the partial converse.

In the final section we discuss a few different examples of content-type series that appear in the literature. We start by discussing
the solution of an enumeration problem which was discussed in \cite{GJ1} concerning certain tuples of permutations. In addition, we
recall a specialization of this result which gives an alternate proof of the fact that the double Hurwitz series can be embedded in a solution
to the 2-Toda hierarchy. We then discuss the Schur measure on partitions and show that its correlators satisfy the 2-Toda hierarchy.
Lastly, we show that the Harish-Chandra Itzykson Zuber integral also satisfies the 2-Toda hierarchy.

\section{Background}

We begin with some notation for partitions (for additional information see \cite{Mac}). If $\la_1, \cdots, \la_n$ are integers
with $\la_1 \geq \cdots \geq \la_n \geq 1$ and $\la_1 + \cdots + \la_n = d$, then $\la = (\la_1, \cdots, \la_n)$ is said to
be a partition of $|\la| := d$ with $\ell(\la) := n$ parts. The empty list $\ep$ of integers is the unique partition with $d = n = 0$.
We will use $\cP$ to denote the set of all partitions. If $\la$ has $f_j$ parts equal to $j$ for $j = 1, \cdots, d$, then we may also
write $\la = d^{f_d} \cdots 1^{f_1}$ and if $f_j = 1$ for some $j$ we will omit the exponent. Also, Aut $\la$ denotes the set of permutations
that fix $\la$; therefore $|\mbox{ Aut } \la| = \prod_{j \geq 1} f_j!$. We will often identify a partition $\la$
with its diagram, a left justified array of unit squares, called cells, with $\la_i$ cells in the $i$th row. The conjugate of $\la$ is the partition
$\la^t$ whose diagram is the diagram of $\la$, reflected along the main diagonal. We will use $\square$ to denote a cell and we will
write $\square \in \la$ to mean that $\square$ is a cell of the partition $\la$. Also, for $\square \in \la$ we define the content of the cell $\square$
by $c(\square) := j - i$ where $j$ is the column index of $\square$ and $i$ is the row index.

Now, let $\la \in \cP$ and $i \geq 1$ be an integer. Let $u_i(\la)$ be the unique integer such that
	\[ \la_{u_i(\la)} \geq i > \la_{u_i(\la) + 1}. \]
We define
	\[ \la \uparrow i = (\la_1 - 1, \la_2 - 1, \cdots, \la_{u_i(\la)} - 1, i - 1, \la_{u_i(\la)+1}, \cdots ). \]
The operation $\la \mapsto \la \uparrow i$ can be thought of informally as follows. We add a part of size $i$ to $\la$ such that the result is
still a partition and the index of the added part is as large as possible. We then reduce the size of each part not displaced (including the
new part) by one. From the definition of $\la \uparrow i$, we compute
	\[ |\la \uparrow i| = |\la| + i - u_i(\la) - 1, \]
and
	\[ |\la| - \ell(\la) = |\la \uparrow 1| < |\la \uparrow 2| < \cdots. \]
\begin{exa}
	Suppose $\la = 754^21$ and that we wish to determine $\la \uparrow 4$. The largest index at which $4$ can be placed in $\la$ such that the result is
	still a partition is $5$ and so $u_4(\la) = 5$. Thus $\la \uparrow 4 = 643^31$. Similarly, if we wish to determine $\la \uparrow 3$ we have
	$u_3(\la) = 5$ again and so $\la \uparrow 3 = 643^221$.
	
	Now suppose that $\la = \ep$. Then for any $i \geq 1$ we have $u_i(\ep) = 1$ and so $\ep \uparrow i = i - 1$. If $\la = 1^k$ then we have
	$u_1(1^k) = k + 1$ and so $1^k \uparrow 1 = \ep$.
\end{exa}

We also define the dual operation. Let $\la \in \cP$ and $j \geq 1$ be an integer. We define
	\[ \la \downarrow j = (\la_1 + 1, \la_2 + 1, \cdots, \la_{j-1} + 1, \la_{j+1}, \cdots). \]
Informally we think of the operation $\la \mapsto \la \downarrow j$ as removing the part $\la_j$ and then increasing the size of each part not displaced.
From the definition of $\la \downarrow j$, we compute
	\[ |\la \downarrow j| = |\la| + j - \la_j - 1, \]
and
	\[ |\la| - \la_1 = |\la \downarrow 1| < |\la \downarrow 2| < \cdots. \]
\begin{exa}
	Suppose $\la = 643^31$. Then we easily see that $\la \downarrow 5 = 754^21$. Similarly, if $\la = 643^221$ then $\la \downarrow 5 = 754^21$.
	
	Now suppose that $\la = \ep$. Then for any $j \geq 1$ we see that $\ep \downarrow j = 1^{j-1}$. If $\la = k$ then it is easily seen that
	$\la \downarrow 1 = \ep$.
\end{exa}

Additionally, from the definition of $\la \uparrow i$ and $\la \downarrow j$, we have
	\[ (\la \uparrow i) \downarrow (u_i(\la) + 1) = \la, \]
and
	\[ (\la \downarrow j) \uparrow (\la_j + 1) = \la. \]

There is another, more combinatorial, description of the operations $\la \mapsto \la \uparrow i$ and $\la \mapsto \la \downarrow j$ which was discussed
in \cite{CG1}, however, we shall not have need of it here. It does, however, make the following relationship clear,
	\[ (\la^t) \uparrow i = (\la \downarrow i)^t. \]
	
Throughout this paper we use $t$, $a$, $b$, $\bfp = (p_1, p_2, \cdots)$, $\bfq = (q_1, q_2, \cdots)$, $\bfw = (w_1, w_2, \cdots)$, $\bfz = (z_1, z_2, \cdots)$,
and $\bfy = (\cdots, y_{-1}, y_0, y_1, \cdots)$ to denote algebraically independent indeterminates. We also write $\bfp + \bfq$ to mean the
sequence $(p_1 + q_1, p_2 + q_2, \cdots)$. For $\la \in \cP$ we write $p_\la = \prod_{i \geq 1} p_{\la_i}$.

For $i \geq 0$ define the polynomials $h_i(\bfp)$ by
	\[ \sum_{i \geq 0} h_i(\bfp) t^i = \exp\left( \sum_{k \geq 1} \frac{p_k}{k} t^k \right), \]
and for $i < 0$, $h_i(\bfp) = 0$.

For $\la \in \cP$ define the polynomials $s_\la(\bfp)$ by
	\[ s_\la(\bfp) = \det\left( h_{\la_i - i + j}(\bfp) \right)_{1 \leq i,j \leq n}, \]
where $n \geq \ell(\la)$. The $s_\la(\bfp)$ are called the Schur polynomials and they form a basis for the ring of formal power series in $\bfp$.

We introduce the inner product $\langle \cdot, \cdot \rangle$ defined by
	\[ \langle s_\la(\bfp), s_\mu(\bfp) \rangle = \delta_{\la,\mu}, \,\, \forall \la, \mu \in \cP. \]
For any polynomial $f(\bfp)$ we define the adjoint of multiplication by $f$, written $f^\perp$, by requiring that for all polynomials $g(\bfp)$ and $h(\bfp)$,
	\[ \langle f^\perp(\bfp) g(\bfp), h(\bfp) \rangle = \langle g(\bfp), f(\bfp) h(\bfp) \rangle. \]
It can be shown (see \cite{Mac} Chapter I, Section 5, Exercise 3) that $p_i^\perp = i \frac{\del}{\del p_i}$ and that for any polynomial
$f(\bfp)$, $f^\perp = f(p_1^\perp, p_2^\perp, \cdots)$. In particular, $f^\perp$ is a differential operator.

Note that if we interpret $p_i$ as the $i$th power sum symmetric function then the Schur polynomials, $s_\la$, become the Schur symmetric functions and
$\langle \cdot, \cdot \rangle$ becomes the Hall inner product\cite{Mac}.

We now define the Bernstein operator,
	\[ B(\bfp; t) = \exp\left( \sum_{k \geq 1} \frac{t^k}{k} p_k \right) \exp\left( -\sum_{k \geq 1} \frac{t^{-k}}{k} p_k^\perp \right), \]
and the adjoint Bernstein operator
	\[ B^\perp(\bfp; t) = \exp\left( -\sum_{k \geq 1} \frac{t^k}{k} p_k \right) \exp\left( \sum_{k \geq 1} \frac{t^{-k}}{k} p_k^\perp \right). \]

In \cite{CG1} we showed that the Bernstein operator and its adjoint act nicely on Schur polynomials. Specifically, we have the following
\begin{thm}\label{bernsteinaction}
	Suppose that $a_\la, \la \in \cP$ are scalars. Then
		\[ B(\bfp; t) \sum_{\la \in \cP} a_\la s_\la(\bfp)
					= \sum_{\be \in \cP} s_\be(\bfp) \sum_{k \geq 1} (-1)^{k-1} t^{|\be| - |\be \downarrow k|} a_{\be \downarrow k}, \]
	and
		\[ B^\perp(\bfp; t) \sum_{\la \in \cP} a_\la s_\la(\bfp)
					= \sum_{\al \in \cP} s_\al(\bfp) \sum_{m \geq 1} (-1)^{|\al| - |\al \uparrow m| + m - 1} t^{|\al| - |\al \uparrow m|} a_{\al \uparrow m}. \]
\end{thm}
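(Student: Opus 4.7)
By linearity, it suffices to establish the two formulas on a single Schur polynomial $s_\mu$ for each $\mu \in \cP$. The identity $(\mu \uparrow i) \downarrow (u_i(\mu)+1) = \mu$ gives a bijection between indices $i \geq 1$ and pairs $(\be, k)$ with $\be \downarrow k = \mu$, via $\be = \mu \uparrow i$ and $k = u_i(\mu)+1$; together with the dual bijection obtained from $(\mu \downarrow j) \uparrow (\mu_j+1) = \mu$, this reindexing reduces the two claims to
\[ B(\bfp; t) s_\mu = \sum_{i \geq 1} (-1)^{u_i(\mu)} t^{\, |\mu \uparrow i| - |\mu|} s_{\mu \uparrow i}, \qquad B^\perp(\bfp; t) s_\mu = \sum_{j \geq 1} (-1)^{j-1} t^{\, |\mu \downarrow j| - |\mu|} s_{\mu \downarrow j}. \]

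Next I would factor $B(\bfp; t) = \Ga_+(t) \Ga_-(t)$, where $\Ga_+(t)$ is multiplication by $H(t) := \sum_{i \geq 0} h_i(\bfp) t^i$ and $\Ga_-(t)$ acts as the plethystic substitution $f(\bfp) \mapsto f(\bfp - [t^{-1}])$. The dual Cauchy identity applied to $\Ga_-$ yields the dual Pieri expansion
\[ \Ga_-(t) s_\mu = \sum_{\nu \,:\, \mu/\nu \text{ is a vertical strip}} (-t^{-1})^{|\mu|-|\nu|} s_\nu, \]
while Pieri's rule for multiplication by $h_k$ gives
\[ \Ga_+(t) s_\nu = \sum_{\rho \,:\, \rho/\nu \text{ is a horizontal strip}} t^{|\rho|-|\nu|} s_\rho. \]
Composing produces
\[ B(\bfp; t) s_\mu = \sum_\rho s_\rho \, t^{|\rho|-|\mu|} \, N(\mu, \rho), \qquad N(\mu, \rho) := \sum_\nu (-1)^{|\mu|-|\nu|}, \]
with the inner sum taken over $\nu$ for which $\mu/\nu$ is a vertical strip and $\rho/\nu$ is a horizontal strip.

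The combinatorial heart of the proof is to show that $N(\mu, \rho) = 0$ unless $\rho = \mu \uparrow i$ for some $i \geq 1$, in which case $N(\mu, \rho) = (-1)^{u_i(\mu)}$. My plan is a sign-reversing involution on admissible $\nu$: call a cell of $\mu \cap \rho$ \emph{toggleable} if swapping its membership in $\nu$ preserves both strip conditions, and toggle the distinguished toggleable cell of minimal row-then-column index; this flips $(-1)^{|\mu|-|\nu|}$ and cancels the corresponding contributions in pairs. The fixed points (those $\nu$ with no toggleable cell) can only arise when $\mu$ and $\rho$ differ by the insertion/shift pattern of a single $\uparrow$-operation, forcing $\rho = \mu \uparrow i$ with $\nu$ uniquely determined; the surviving sign $(-1)^{u_i(\mu)}$ records the $u_i(\mu)$ rows of $\mu$ lying above the inserted part of size $i-1$. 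The $B^\perp$ formula follows by the symmetric argument with $\Ga_+ \leftrightarrow \Ga_-$ and the roles of horizontal and vertical strips swapped, or equivalently by transporting the first formula through the conjugation duality $(\la^t)\uparrow i = (\la\downarrow i)^t$ that realizes the $\omega$-involution on symmetric functions. The main obstacle is defining the involution precisely and carrying the sign bookkeeping through correctly; once $\rho = \mu \uparrow i$ is forced on the fixed points, the exponent of $t$ is immediate from the identity $|\mu \uparrow i|-|\mu| = i - u_i(\mu) - 1$ already recorded in the paper.
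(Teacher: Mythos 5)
A point of reference first: this paper does not actually prove Theorem~\ref{bernsteinaction}; the result is imported from \cite{CG1}, where it is obtained by tracking the effect of $B(\bfp;t)$ on the code (Maya diagram) of a partition, which amounts to the Jacobi--Trudi straightening of $\det(h_{\nu_i-i+j})$ for $\nu=(r,\mu_1,\mu_2,\dots)$. Your route is genuinely different and is viable. The reductions you make are correct: the identities $(\mu\uparrow i)\downarrow(u_i(\mu)+1)=\mu$ and $(\mu\downarrow j)\uparrow(\mu_j+1)=\mu$ do give the claimed bijections between indices and pairs, the signs come out to $(-1)^{u_i(\mu)}$ and $(-1)^{j-1}$ exactly as you state, the factorization of $B(\bfp;t)$ into multiplication by $\sum_i h_i t^i$ followed by the substitution $p_k\mapsto p_k-t^{-k}$ is right, and so are the two Pieri expansions and the resulting expression $B(\bfp;t)s_\mu=\sum_\rho N(\mu,\rho)\,t^{|\rho|-|\mu|}s_\rho$.

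The only substantive gap is the evaluation of $N(\mu,\rho)$, which you correctly flag as the main obstacle. It closes, and more easily than a cell-by-cell involution suggests, because the admissible $\nu$ decouple across rows: $\mu/\nu$ is a vertical strip iff $\nu_k\in\{\mu_k-1,\mu_k\}$ for every $k$ (weak decrease of $\nu$ is then forced by the other condition), while $\rho/\nu$ is a horizontal strip iff $\rho_k\ge\nu_k\ge\rho_{k+1}$ for every $k$. Hence the admissible $\nu$ are exactly the elements of the product set $\prod_{k\ge1}S_k$ with $S_k=\{\mu_k-1,\mu_k\}\cap[\max(\rho_{k+1},0),\rho_k]$, and
\[ N(\mu,\rho)=\prod_{k\ge1}\sum_{c\in S_k}(-1)^{\mu_k-c}, \]
where each factor is $0$ if $|S_k|\ne1$, is $-1$ if $S_k=\{\mu_k-1\}$, and is $+1$ if $S_k=\{\mu_k\}$. (Your involution is then just ``toggle $\nu_k$ in the first row with $|S_k|=2$''; the decoupling is what makes it well defined, since toggling one row cannot affect the toggleability of another.) A short case analysis shows that all $S_k$ are singletons iff the set $\{k:S_k=\{\mu_k-1\}\}$ is an initial segment $\{1,\dots,u\}$ and $\rho=(\mu_1-1,\dots,\mu_u-1,i-1,\mu_{u+1},\dots)$ for some $i$ with $\mu_u\ge i>\mu_{u+1}$; that is, $\rho=\mu\uparrow i$ with $u=u_i(\mu)$, giving $N(\mu,\rho)=(-1)^{u_i(\mu)}$ as required. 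For the adjoint formula, one caution: the paper's $B^\perp(\bfp;t)$ is the adjoint of $B(\bfp;t^{-1})$, not of $B(\bfp;t)$, and under conjugation one has $\omega B(\bfp;t)\omega=B^\perp(\bfp;-t)$; either transport works, but the extra sign $(-1)^{|\mu\downarrow j|-|\mu|}=(-1)^{j-\mu_j-1}$ arising from the powers of $-t$ must be combined with $(-1)^{u_j(\mu^t)}=(-1)^{\mu_j}$ to yield the stated $(-1)^{j-1}$.
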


In addition to showing that the Bernstein operator acts nicely on Schur polynomials, we also showed that the commutator of $B(\bfp; t)$ and a certain
translation operator was simple. Specifically, define
	\[ \Ta(\bfp, \bfq) = \exp\left( \sum_{k\geq 1} q_k \frac{\del}{\del p_k} \right), \]
and
	\[ \Ga(\bfq; t) = \exp\left( \sum_{k \geq 1} \frac{t^i}{i} q_i \right). \]
Using multivariate Taylor series, we see that if $f(\bfp)$ is a formal power series then
	\[ \Ta(\bfp, \bfq) f(\bfp) = f(\bfp + \bfq). \]
It is not difficult to show that the following relations hold between $B$ and $\Ta$. Note that this result essentially arises in the proof of
Theorem 5.3 in \cite{CG1}.
\begin{pro}\label{commutator}
	We have
		\[ B(\bfp; t) \Ta(\bfp, \bfq) = \Ga(\bfq; t)^{-1} \Ta(\bfp, \bfq) B(\bfp; t), \]
	and
		\[ B^\perp(\bfp; t) \Ta(\bfp, \bfq) = \Ga(\bfq; t) \Ta(\bfp, \bfq) B^\perp(\bfp; t). \]
\end{pro}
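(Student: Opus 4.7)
The plan is to split the Bernstein operator into its multiplicative and differential factors,
$$B(\bfp;t) = M_+(\bfp;t) \cdot D_-(\bfp;t),$$
where $M_+(\bfp;t) = \exp\!\left(\sum_{k\geq1}\tfrac{t^k}{k}p_k\right)$ acts as multiplication and $D_-(\bfp;t) = \exp\!\left(-\sum_{k\geq1}\tfrac{t^{-k}}{k}p_k^\perp\right)$ acts purely as a differential operator, and then push $\Ta(\bfp,\bfq)$ past each factor separately.

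The key observation is that since $p_k^\perp = k\,\partial/\partial p_k$, we can rewrite
$$D_-(\bfp;t) = \exp\!\left(-\sum_{k\geq1} t^{-k}\frac{\partial}{\partial p_k}\right),$$
which is itself a translation operator (shifting $\bfp$ by the vector $(-t^{-1},-t^{-2},\ldots)$). Since $\Ta(\bfp,\bfq)$ is translation by $\bfq$, and any two such translation operators commute, I immediately get $D_-(\bfp;t)\Ta(\bfp,\bfq) = \Ta(\bfp,\bfq) D_-(\bfp;t)$.

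Next I handle the multiplicative factor. For any formal power series $f(\bfp)$, applying $M_f \Ta(\bfp,\bfq)$ to a test series $g(\bfp)$ gives $f(\bfp)\,g(\bfp+\bfq)$, so
$$M_f\,\Ta(\bfp,\bfq) = \frac{f(\bfp)}{f(\bfp+\bfq)}\,\Ta(\bfp,\bfq)\,M_f.$$
For $f = M_+(\bfp;t)$ the ratio is $\exp\!\left(-\sum_{k\geq1}\tfrac{t^k}{k}q_k\right) = \Ga(\bfq;t)^{-1}$, and this scalar factor commutes with $\Ta(\bfp,\bfq)$. Combining the two steps,
$$B(\bfp;t)\Ta(\bfp,\bfq) = M_+ D_- \Ta = M_+ \Ta\, D_- = \Ga(\bfq;t)^{-1} \Ta\, M_+ D_- = \Ga(\bfq;t)^{-1}\Ta(\bfp,\bfq) B(\bfp;t),$$
which is the first identity. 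For the second identity the signs of both exponents in $B^\perp$ are flipped, so the differential factor is again a translation (by $(+t^{-1},+t^{-2},\ldots)$) and still commutes with $\Ta(\bfp,\bfq)$; the multiplicative ratio becomes $M_+(\bfp;t)^{-1}/M_+(\bfp+\bfq;t)^{-1} = \Ga(\bfq;t)$ (without the inverse), giving the stated conjugation.

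There is essentially no obstacle here beyond unpacking the definitions; the only conceptual step is recognizing that the "annihilation'' half of the Bernstein operator is itself a translation, after which the proposition reduces to the elementary formula for commuting a multiplication operator past a translation. The computation is therefore short and purely formal, which matches the author's remark that it is already implicit in the proof of Theorem 5.3 of \cite{CG1}.
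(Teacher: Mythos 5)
Your proof is correct and complete. The paper itself offers no argument for this proposition (it only remarks that the claim ``essentially arises in the proof of Theorem 5.3'' of \cite{CG1}), so there is nothing to compare line by line; your decomposition $B = M_+D_-$, the observation that $D_- = \exp\bigl(-\sum_{k\geq1}t^{-k}\partial/\partial p_k\bigr)$ is itself a translation and hence commutes with $\Ta(\bfp,\bfq)$, and the elementary identity $M_f\,\Ta = \tfrac{f(\bfp)}{f(\bfp+\bfq)}\,\Ta\,M_f$ with the ratio reducing to the $\bfp$-independent scalar $\Ga(\bfq;t)^{\mp1}$, together constitute exactly the short formal verification the author is alluding to. The adjoint case is handled correctly by the same two steps with the signs reversed.
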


We can use Theorem~\ref{bernsteinaction} and Proposition~\ref{commutator} to describe various families of partial differential equations.
For the purposes of this paper we will focus on the 2-Toda hierarchy although other related integrable hierarchies such as the $n$-KP
hierarchy \cite{KvL1} can be treated similarly. It is also likely that similar results can be obtained in the B-type case using the results
in \cite{HJS1}, however we have not done so here.

As an easier example, we begin with a description of the KP hierarchy. Suppose that $\tau(\bfp)$ is a formal power series. Then $\tau(\bfp)$ is
a solution to the KP hierarchy if and only if
	\[ [t^{-1}] \left( B(\bfp; t) \tau(\bfp) \right) \left( B^\perp(\bfq; t) \tau(\bfq) \right) = 0. \]
Here we use square brackets to denote the coefficient extraction operator.

\begin{thm}\label{kphierarchy}
	Suppose that $a_\la, \la \in \cP$ are scalars and that
		\[ \tau(\bfp) = \sum_{\la \in \cP} a_\la s_\la(\bfp). \]
	The following are equivalent.
	\begin{enumerate}
		\item[(i)] The formal power series $\tau(\bfp)$ is a solution to the KP hierarchy.
		\item[(ii)] The formal power series $\tau(\bfp + \bfq)$ is a solution to the KP hierarchy in the variables $\bfp$.
		\item[(iii)] For all $\al, \be \in \cP$,
										\[ \sum_{i,j} (-1)^{|\al| - |\al \uparrow i| + i + j} a_{\al \uparrow i} a_{\be \downarrow j} = 0, \]
								 where the sum is over all integers $i,j \geq 1$ such that $|\al \uparrow i| + |\be \downarrow j| = |\al| + |\be| + 1$.
		\item[(iv)] For all $\al, \be \in \cP$,
										\[ \sum_{i,j} (-1)^{|\al| - |\al \uparrow i| + i + j} (s^\perp_{\al \uparrow i}(\bfp) \tau(\bfp)) 
																																													(s^\perp_{\be \downarrow j}(\bfp) \tau(\bfp)) = 0, \]
								 where the sum is over all integers $i,j \geq 1$ such that $|\al \uparrow i| + |\be \downarrow j| = |\al| + |\be| + 1$.
	\end{enumerate}
\end{thm}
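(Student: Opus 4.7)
The plan is to establish (i)$\Leftrightarrow$(iii) by a direct application of Theorem~\ref{bernsteinaction}, to establish (i)$\Leftrightarrow$(ii) via Proposition~\ref{commutator}, and then to deduce (iii)$\Leftrightarrow$(iv) by re-interpreting (iii) for the translated series $\tau(\bfp+\bfq)$ via an operator form of the Cauchy identity. For (i)$\Leftrightarrow$(iii), I would use Theorem~\ref{bernsteinaction} to write
\[ B(\bfp;t)\tau(\bfp) = \sum_{\be\in\cP}s_\be(\bfp)\sum_{j\geq 1}(-1)^{j-1}t^{|\be|-|\be\downarrow j|}a_{\be\downarrow j}, \]
\[ B^\perp(\bfq;t)\tau(\bfq) = \sum_{\al\in\cP}s_\al(\bfq)\sum_{i\geq 1}(-1)^{|\al|-|\al\uparrow i|+i-1}t^{|\al|-|\al\uparrow i|}a_{\al\uparrow i}, \]
multiply, collect powers of $t$, and extract $[t^{-1}]$; the degree constraint $|\al\uparrow i|+|\be\downarrow j|=|\al|+|\be|+1$ emerges automatically, and the two sign exponents combine, modulo $2$, to $|\al|-|\al\uparrow i|+i+j$. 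Linear independence of $\{s_\be(\bfp)s_\al(\bfq)\}_{\al,\be\in\cP}$ then makes the vanishing of $[t^{-1}](B\tau)(B^\perp\tau)$ equivalent to the vanishing of each $s_\be(\bfp)s_\al(\bfq)$-coefficient, which is precisely (iii).

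For (i)$\Leftrightarrow$(ii), I would introduce a fresh shift parameter $\bfw$ and apply Proposition~\ref{commutator} to each factor, writing $\tau(\bfp+\bfw)=\Ta(\bfp,\bfw)\tau(\bfp)$ and analogously in $\bfq$. This yields
\[ \bigl(B(\bfp;t)\tau(\bfp+\bfw)\bigr)\bigl(B^\perp(\bfq;t)\tau(\bfq+\bfw)\bigr) = \Ga(\bfw;t)^{-1}\Ga(\bfw;t)\,\Ta(\bfp,\bfw)\Ta(\bfq,\bfw)\bigl(B(\bfp;t)\tau(\bfp)\bigr)\bigl(B^\perp(\bfq;t)\tau(\bfq)\bigr), \]
in which the $\Ga$-factors are scalars in $\bfp,\bfq$ and cancel, while $\Ta(\bfp,\bfw)$ and $\Ta(\bfq,\bfw)$ act on disjoint variable sets and commute with $[t^{-1}]$. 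Since $\Ta(\bfp,\bfw)\Ta(\bfq,\bfw)$ is invertible, taking $[t^{-1}]$ shows that $\tau$ satisfies (i) iff $\tau(\bfp+\bfw)$ does for every $\bfw$; relabeling $\bfw$ as $\bfq$ yields (ii), and (ii)$\Rightarrow$(i) is immediate by setting $\bfw=\bze$.

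For (iii)$\Leftrightarrow$(iv), I would use the operator Cauchy identity
\[ \Ta(\bfp,\bfq)=\sum_{\mu\in\cP}s_\mu(\bfq)\,s_\mu^\perp(\bfp), \]
obtained by substituting $p_k\mapsto p_k^\perp$ in the classical identity $\exp(\sum_k p_k q_k/k)=\sum_\mu s_\mu(\bfp)s_\mu(\bfq)$, to produce the Schur expansion
\[ \tau(\bfp+\bfq)=\sum_{\mu\in\cP}s_\mu(\bfq)\bigl(s_\mu^\perp(\bfp)\tau(\bfp)\bigr). \]
By (ii), $\tau(\bfp+\bfq)$ is a KP solution in $\bfq$ whose Schur coefficients are $b_\mu(\bfp):=s_\mu^\perp(\bfp)\tau(\bfp)$. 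Applying the already-established (i)$\Leftrightarrow$(iii) (which is valid over any ring of coefficients commuting with $B$ and $B^\perp$, in particular the ring of formal power series in $\bfp$) to this expansion gives (iv); conversely, setting $\bfp=\bze$ in (iv) recovers (iii), since $(s_\mu^\perp(\bfp)\tau(\bfp))|_{\bfp=\bze}=a_\mu$.

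The main obstacle will be the careful sign bookkeeping in step one and the justification of the operator Cauchy identity in step three; for the latter one needs the observation that the operators $p_k^\perp$ pairwise commute and behave, under $\langle\cdot,\cdot\rangle$, just like multiplication by $p_k$, so that the formal substitution $p_k\mapsto p_k^\perp$ in the scalar Cauchy identity produces a bona fide operator identity. Once these two bookkeeping points are in place, the remainder of the argument is entirely formal manipulation driven by Theorem~\ref{bernsteinaction} and Proposition~\ref{commutator}.
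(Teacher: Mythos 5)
Your proposal is correct and follows essentially the same route as the paper's (very terse) proof: Theorem~\ref{bernsteinaction} for (i)$\Leftrightarrow$(iii), Proposition~\ref{commutator} plus the specialization $\bfw=\bze$ for (i)$\Leftrightarrow$(ii), and the reproducing-kernel fact $[s_\mu(\bfq)]\,\tau(\bfp+\bfq)=s_\mu^\perp(\bfp)\tau(\bfp)$ (which you derive via the operator Cauchy identity, the paper via orthonormality of the Schur basis) for the last equivalence. The only cosmetic difference is that you chain (ii)$\Rightarrow$(iv)$\Rightarrow$(iii) rather than proving (ii)$\Leftrightarrow$(iv) directly, which is logically equivalent given the other implications.
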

\begin{proof}
	The fact that (ii) implies (i) follows immediately by setting $q_i = 0$ for all $i \geq 1$. The fact that (i) implies (ii) follows from
	the definition of the KP hierarchy and Proposition~\ref{commutator}. That (i) and (iii) are equivalent follows by taking coefficients
	and using Theorem~\ref{bernsteinaction}. Lastly, that (ii) and (iv) are equivalent follows by coefficient extraction, Theorem~\ref{bernsteinaction}
	and the fact that $[s_\la(\bfp)] \tau(\bfp+\bfq) = s_\la^\perp(\bfp)\tau(\bfp)$ (since the Schur polynomials are orthonormal).
\end{proof}

For the 2-Toda hierarchy we have a similar result, however, it is a little more technical since instead of having a single family of indeterminates,
we have two families of indeterminates and a discrete parameter.

A sequence of formal power series $\{ \tau_n(\bfp, \bfq) \}_{n \in \bbZ}$ is a solution to the 2-Toda hierarchy if and only if for all $k, m \in \bbZ$,
	\begin{align*}
		[t^{k-m}]\left( B(\bfp;t) \right. & \left. \tau_{m}(\bfp,\bfq)\right) \left( B^\perp(\bfw;t)\tau_{k+1}(\bfw,\bfz)\right) = \\
				& [t^{m-k}]\left( B^\perp(\bfq;t)\tau_{m+1}(\bfp,\bfq)\right) \left(B(\bfz;t)\tau_{k}(\bfw;\bfz)\right).
	\end{align*}
\begin{thm}\label{2todahierarchy}
	Suppose that $a^\la_\mu(n), \la,\mu \in \cP$, $n \in \bbZ$ are scalars and that for all $n \in \bbZ$,
		\[ \tau_n(\bfp, \bfq) = \sum_{\la,\mu \in \cP} a^\la_\mu(n) s_\la(\bfp) s_\mu(\bfq). \]
	The following are equivalent.
	\begin{enumerate}
		\item[(i)]	The sequence of formal power series $\{ \tau_n(\bfp,\bfq) \}_{n \in \bbZ}$ is a solution to the 2-Toda hierarchy.
		\item[(ii)] The sequence of formal power series $\{ \tau_n(\bfp+\bfw,\bfq+\bfz) \}_{n \in \bbZ}$ is a solution to the 2-Toda hierarchy
								in the variables $\bfp$ and $\bfq$.
		\item[(iii)]	For all $m, k \in \bbZ$ and $\al, \be, \la, \mu \in \cP$,
										\begin{align*}
											\sum_{i,j} (-1)^{|\al|-|\al\uparrow j|+i+j} & a_\mu^{\la\downarrow i}(m) a_\be^{\al\uparrow j}(k+1) = \\
													& \sum_{s,t} (-1)^{|\mu| + |\mu\uparrow s|+s+t} a^\la_{\mu\uparrow s}(m+1) a^\al_{\be\downarrow t}(k),
										\end{align*}
									where the first sum is over integers $i,j \geq 1$ such that $|\la\downarrow i| + |\al\uparrow j| = |\la| + |\al| + m - k$ and the second sum
									is over integers $s,t \geq 1$ such that $|\mu\uparrow s| + |\be\downarrow t| = |\mu| + |\be| + k - m$.
		\item[(iv)]		for all $m, k \in \bbZ$ and $\al, \be, \la, \mu \in \cP$,
										\begin{align*}
											\sum_{i,j} (-1)^{|\al|-|\al\uparrow j|+i+j} \left( s^\perp_{\la\downarrow i}(\bfp) s^\perp_\mu(\bfq) \tau_m(\bfp,\bfq) \right)
																									\left( s^\perp_{\al\uparrow j}(\bfp) s^\perp_\be(\bfq) \tau_{k+1}(\bfp,\bfq) \right) \\
													= \sum_{s,t} (-1)^{|\mu|-|\mu\uparrow s|+s+t} \left( s^\perp_\la(\bfp) s^\perp_{\mu\uparrow s}(\bfq) \tau_{m+1}(\bfp,\bfq) \right)
																										\left( s^\perp_\al(\bfp) s^\perp_{\be\downarrow t}(\bfq) \tau_k(\bfp,\bfq) \right),
										\end{align*}
									where the first sum is over integers $i,j \geq 1$ such that $|\la\downarrow i| + |\al\uparrow j| = |\la| + |\al| + m - k$ and the second sum
									is over integers $s,t \geq 1$ such that $|\mu\uparrow s| + |\be\downarrow t| = |\mu| + |\be| + k - m$.
	\end{enumerate}
\end{thm}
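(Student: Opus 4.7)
The plan is to follow the proof template of Theorem~\ref{kphierarchy}, extending it to accommodate the two variable families $(\bfp,\bfq)$ and $(\bfw,\bfz)$ of the 2-Toda bilinear equation together with the discrete index $n \in \bbZ$. I would establish (ii)~$\Rightarrow$~(i) first (immediate), then (i)~$\Rightarrow$~(ii) via Proposition~\ref{commutator}, then (i)~$\Leftrightarrow$~(iii) by coefficient extraction using Theorem~\ref{bernsteinaction}, and finally (ii)~$\Leftrightarrow$~(iv) via the Schur coproduct formula.

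The direction (ii)~$\Rightarrow$~(i) is obtained by specializing the translation parameters to zero. For (i)~$\Rightarrow$~(ii), I would write the translated series as $\Ta(\bfp,\bfw)\Ta(\bfq,\bfz)\tau_n(\bfp,\bfq)$ and push each Bernstein operator in the translated 2-Toda equation through the $\Ta$'s using Proposition~\ref{commutator}. On the LHS, the factor $\Ga(\bfw;t)^{-1}$ produced by $B(\bfp;t)\Ta(\bfp,\bfw)$ is exactly canceled by the factor $\Ga(\bfw;t)$ produced by $B^\perp$ commuting through the $\Ta$ attached to the second variable copy, and an analogous cancellation of $\Ga(\bfz;t)^{\pm 1}$ occurs on the RHS (the cancellation works because each Bernstein operator involves only one of the two variable families). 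The surviving translation operators are common to both sides and factor out, reducing the translated identity to the original one.

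For (i)~$\Leftrightarrow$~(iii), I would substitute the Schur expansion of each $\tau_n$ into the four Bernstein factors and apply Theorem~\ref{bernsteinaction} to obtain, for instance,
\[ B(\bfp;t)\tau_m(\bfp,\bfq) = \sum_{\la,\mu \in \cP} s_\la(\bfp) s_\mu(\bfq) \sum_{i \geq 1} (-1)^{i-1} t^{|\la|-|\la\downarrow i|} a^{\la\downarrow i}_\mu(m), \]
with a parallel formula for $B^\perp(\bfw;t)\tau_{k+1}(\bfw,\bfz)$ carrying the sign $(-1)^{|\al|-|\al\uparrow j|+j-1}$ from the $\uparrow$-action. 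Multiplying the two factors on each side of the 2-Toda bilinear identity, extracting $[t^{k-m}]$ on the LHS and $[t^{m-k}]$ on the RHS, and reading off the coefficient of $s_\la(\bfp)s_\mu(\bfq)s_\al(\bfw)s_\be(\bfz)$ produces exactly the scalar identity in (iii); the summation constraint $|\la\downarrow i|+|\al\uparrow j|=|\la|+|\al|+m-k$ (and its dual on the RHS) is forced by matching total $t$-degree, and the four independent families of Schur polynomials form a basis, so the two conditions are equivalent.

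For (ii)~$\Leftrightarrow$~(iv), I would compute the Schur expansion of $\tau_n(\bfp+\bfw,\bfq+\bfz)$ in $(\bfp,\bfq)$: using the coproduct identity $s_\la(\bfp+\bfw)=\sum_\nu s_\nu(\bfp)s_{\la/\nu}(\bfw)$ together with $s_{\la/\nu}=s_\nu^\perp s_\la$ (both consequences of Schur orthonormality), the coefficient of $s_\la(\bfp)s_\mu(\bfq)$ equals $s_\la^\perp(\bfw)s_\mu^\perp(\bfz)\tau_n(\bfw,\bfz)$. Applying the already-proven (i)~$\Leftrightarrow$~(iii) to the translated sequence, whose Schur coefficients now lie in $\bbQ[[\bfw,\bfz]]$ rather than $\bbQ$, and then renaming $(\bfw,\bfz)$ back to $(\bfp,\bfq)$ yields the functional identity (iv). The main bookkeeping obstacle is verifying that $(-1)^{i-1}$ and $(-1)^{|\al|-|\al\uparrow j|+j-1}$ combine to $(-1)^{|\al|-|\al\uparrow j|+i+j}$ on the LHS of (iii) and that the dual combination on the RHS agrees modulo $2$ with the form $(-1)^{|\mu|+|\mu\uparrow s|+s+t}$ written in the theorem; no conceptually new ingredient beyond Theorem~\ref{kphierarchy} is required.
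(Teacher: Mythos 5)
Your proposal is correct and takes essentially the same route as the paper, which simply states that the proof is the same as that of Theorem~\ref{kphierarchy}: (ii)$\Rightarrow$(i) by specializing the translation to zero, (i)$\Rightarrow$(ii) via Proposition~\ref{commutator} with the $\Ga$ factors cancelling, (i)$\Leftrightarrow$(iii) by coefficient extraction through Theorem~\ref{bernsteinaction}, and (ii)$\Leftrightarrow$(iv) using $[s_\la(\bfp)s_\mu(\bfq)]\tau_n(\bfp+\bfw,\bfq+\bfz)=s_\la^\perp(\bfw)s_\mu^\perp(\bfz)\tau_n(\bfw,\bfz)$. Your sign and degree bookkeeping (including the observation that $(-1)^{|\mu|-|\mu\uparrow s|+s+t}$ agrees with the stated $(-1)^{|\mu|+|\mu\uparrow s|+s+t}$ modulo $2$) checks out.
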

\begin{proof}
	The proof is essentially the same as Theorem~\ref{kphierarchy}.
\end{proof}

\begin{exa}
	In the case of the 2-Toda hierarchy, choose $k = m-1, \al = \la = \be = \epsilon$ and $\mu = 1$. We compute $\epsilon \uparrow 1 = \epsilon$,
	$\epsilon \uparrow 2 = 1$, $\epsilon \downarrow 1 = \epsilon$, $\epsilon \downarrow 2 = 1$, $1 \uparrow 1 = \epsilon$ and $1 \uparrow 2 = 1^2$.
	The only solutions $(i,j)$ to $|\la \downarrow i| + |\al \uparrow j| = |\la| + |\al| + 1$ are $(1,2)$ and $(2,1)$. Similarly, the only solution
	$(s,t)$ to $|\mu \uparrow s| + |\be \downarrow t| = |\mu| + |\be| - 1$ is $(1,1)$. Theorem~\ref{2todahierarchy}(iii) then gives
		\[ a_1^\ep(m)a_\ep^1(m) - a_1^1(m) a_\ep^\ep(m) = -a_\ep^\ep(m+1) a_\ep^\ep(m-1) \]
	as one of the coefficient constraints. Similarly, Theorem~\ref{2todahierarchy}(iv) gives
		\begin{align*}
			\left( s_\ep^\perp(\bfp) s_\ep^\perp(\bfq) \tau_{m+1} \right) \left( s_\ep^\perp(\bfp) s_\ep^\perp(\bfq) \tau_{m-1} \right)
				&+ \left( s_1^\perp(\bfp) s_\ep^\perp(\bfq) \tau_m \right) \left( s_\ep^\perp(\bfp) s_1^\perp(\bfq) \tau_m \right) \\
				&= \left( s_1^\perp(\bfp) s_1^\perp(\bfq) \tau_m \right) \left( s_\ep^\perp(\bfp) s_\ep^\perp(\bfq) \tau_m \right)
		\end{align*}
	as one of the partial differential equations in the 2-Toda hierarchy. Using the fact that $s_\ep^\perp(\bfp) = 1$ and
	$s_1^\perp(\bfp) = \frac{\del}{\del p_1}$, this gives
		\[ \tau_{m+1} \tau_{m-1} + \left( \frac{\del}{\del p_1} \tau_m \right) \left( \frac{\del}{\del q_1} \tau_m \right)
					= \tau_m \left( \frac{\del^2}{\del p_1 \del q_1} \tau_m \right), \]
	which can be further simplified to 
		\[ \frac{\del^2}{\del p_1 \del q_1} \log \tau_m = \frac{\tau_{m+1}\tau_{m-1}}{\tau_m^2}. \]
	This last equation is called the 2-Toda equation.
\end{exa}

We will now take this opportunity to discuss some of the integrable hierarchies that reside within of the 2-Toda hierarchy.

\begin{thm}\label{subhie}
	Suppose the sequence of formal power series $\{\tau_m(\bfp,\bfq)\}_{m \in \bbZ}$ is a solution to the 2-Toda hierarchy. Then for any $m \in \bbZ$, $r > 0$
	and $\la, \al \in \cP$, we have
	\begin{equation*}
		\sum_{i,j} (-1)^{|\al| + |\al\uparrow j| + i + j} \left( s^\perp_{\la\downarrow i}(\bfp) \tau_m(\bfp,\bfq) \right)
																											\left( s^\perp_{\al\uparrow j}(\bfp) \tau_{m-r+1}(\bfp,\bfq) \right) = 0,
	\end{equation*}
	where the sum is over $i,j \geq 1$ such that $|\la\downarrow i| + |\al\uparrow j| = |\la| + |\mu| + r$.	By symmetry, this also implies that
	\begin{equation*}
		\sum_{i,j} (-1)^{|\al| + |\al\uparrow j| + i + j} \left( s^\perp_{\la\downarrow i}(\bfq) \tau_m(\bfp,\bfq) \right)
																											\left( s^\perp_{\al\uparrow j}(\bfq) \tau_{m-r+1}(\bfp,\bfq) \right) = 0,
	\end{equation*}
	where the sum is over $i,j \geq 1$ such that $|\la\downarrow i| + |\al\uparrow j| = |\la| + |\mu| + r$.
\end{thm}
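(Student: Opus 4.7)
The plan is to derive Theorem~\ref{subhie} as a direct specialization of Theorem~\ref{2todahierarchy}(iv), by choosing the "dummy" partitions on the $\bfq$ side to be empty and picking the discrete parameter $k$ to align with the desired $\tau_{m-r+1}$ index.

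Concretely, I would apply the PDE form Theorem~\ref{2todahierarchy}(iv) to the particular choice $\mu = \be = \ep$ and $k = m - r$. Since $s^\perp_\ep(\bfq) = 1$, every factor of the form $s^\perp_\ep(\bfq)$ disappears, so that with $k+1 = m-r+1$ the left-hand side reduces to
\[
\sum_{i,j} (-1)^{|\al| - |\al\uparrow j| + i + j}
\bigl(s^\perp_{\la\downarrow i}(\bfp)\,\tau_m(\bfp,\bfq)\bigr)
\bigl(s^\perp_{\al\uparrow j}(\bfp)\,\tau_{m-r+1}(\bfp,\bfq)\bigr),
\]
with the summation constraint $|\la\downarrow i| + |\al\uparrow j| = |\la| + |\al| + m - k = |\la| + |\al| + r$, which matches the theorem (noting that $(-1)^{|\al|-|\al\uparrow j|} = (-1)^{|\al|+|\al\uparrow j|}$ so the sign prefactors agree).

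The remaining step is to verify that the right-hand side of Theorem~\ref{2todahierarchy}(iv) vanishes under this specialization. Its summation constraint becomes $|\ep\uparrow s| + |\ep\downarrow t| = |\ep| + |\ep| + k - m = -r$. Using the explicit formulas recalled in the background section, $\ep\uparrow s = (s-1)$ and $\ep\downarrow t = 1^{t-1}$, so the constraint reads $(s-1) + (t-1) = -r$, i.e.\ $s + t = 2 - r$. For $r > 0$ and $s,t \geq 1$ this has no integer solutions, so the right-hand sum is empty and thus zero. This proves the first identity. The second identity follows by repeating the argument after swapping the roles of $\bfp$ and $\bfq$, which is legitimate because the defining equation of the 2-Toda hierarchy is symmetric under interchanging the two families of variables (together with the corresponding interchange of Bernstein and adjoint Bernstein operators on each side).

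The only step with any subtlety is checking that the right-hand constraint is genuinely unsatisfiable; this is where the hypothesis $r>0$ is used decisively, since the minimum value of $|\ep\uparrow s| + |\ep\downarrow t|$ over admissible $s,t$ is $0$. Everything else is bookkeeping — tracking signs, size constraints, and the disappearance of the $s^\perp_\ep(\bfq)$ operators — so I do not expect any real obstacle, and no new machinery beyond what is already in Theorems~\ref{bernsteinaction} and \ref{2todahierarchy} is required.
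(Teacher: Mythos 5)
Your proof is correct and is essentially identical to the paper's: both specialize Theorem~\ref{2todahierarchy}(iv) with $\mu = \be = \ep$ and $k = m - r$, and observe that the right-hand constraint $|\ep\uparrow s| + |\ep\downarrow t| = -r < 0$ admits no solutions $s,t \geq 1$. Your added details (the explicit formulas $\ep\uparrow s = (s-1)$, $\ep\downarrow t = 1^{t-1}$, the sign parity check, and the correct constraint $|\la|+|\al|+r$, which fixes a typo in the theorem statement where $|\mu|$ appears in place of $|\al|$) are all sound.
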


\begin{proof}
	If we set $k = m - r$ and $\mu = \be = \epsilon$ in Theorem~\ref{2todahierarchy}(iv) then the right hand side of the equation is a sum over integers
	$s, t \geq 1$ such that
	\begin{equation*}
		|\mu \uparrow s| + |\be \downarrow t| = |\mu| + |\be| + k - m = -r < 0,
	\end{equation*}
	and so there can be no solutions $s,t \geq 1$. The result then follows.
\end{proof}

In particular, if we set $r = 1$ in Theorem~\ref{subhie} then the resulting family of partial differential equations are those found in Theorem~\ref{kphierarchy}(iv).
Theorem~\ref{subhie} thus implies the well known result that if $\{\tau_m(\bfp,\bfq)\}_{m \in \bbZ}$ is a solution to the 2-Toda hierarchy, then each
$\tau_m$ is a solution to the KP hierarchy in $\bfp$ and $\bfq$ independently.

\section{Diagonal Solutions}

For the remainder of this paper we will assume that the sequence of formal power series $\{ \tau_n(\bfp,\bfq) \}_{n \in \bbZ}$ is diagonal. That is, for
each $n \in \bbZ$,
	\[ \tau_n(\bfp, \bfq) = \sum_{\la \in \cP} g_\la(n) s_\la(\bfp) s_\la(\bfq), \]
where the $g_\la(n), \la \in \cP, n \in \bbZ$ are scalars.

We begin by describing a specialization to diagonal solutions of the characterization of solutions to the 2-Toda hierarchy

\begin{thm}\label{diagonalsolutions}
	The sequence of formal power series $\{ \tau_n(\bfp, \bfq) \}_{n \in \bbZ}$ is a solution to the 2-Toda hierarchy if and only if for all
	$\la, \mu \in \cP$, $n, m \in \bbZ$ and integers $i,j \geq 1$ such that $|\la| + |\mu| = |\la \uparrow i| + |\mu \downarrow j| + n - m - 1$, we have
		\[ g_{\la \uparrow i}(n) g_{\mu \downarrow j}(m) = g_\la(n-1) g_\mu(m+1). \]
\end{thm}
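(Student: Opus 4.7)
The natural plan is to derive Theorem~\ref{diagonalsolutions} by substituting the diagonal ansatz $a^\la_\mu(n) = g_\la(n)\,\delta_{\la,\mu}$ into Theorem~\ref{2todahierarchy}(iii):
\begin{align*}
\sum_{i,j}(-1)^{|\al|-|\al\uparrow j|+i+j}\,a_\mu^{\la\downarrow i}(m)\,a_\be^{\al\uparrow j}(k+1) = \sum_{s,t}(-1)^{|\mu|+|\mu\uparrow s|+s+t}\,a^\la_{\mu\uparrow s}(m+1)\,a^\al_{\be\downarrow t}(k).
\end{align*}
The summand on the left vanishes unless $\la\downarrow i=\mu$ and $\al\uparrow j=\be$, and the summand on the right vanishes unless $\mu\uparrow s=\la$ and $\be\downarrow t=\al$. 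Because $|\la\downarrow i|$ and $|\al\uparrow j|$ are strictly monotone in $i$ and $j$, each sum has at most one nonzero term; the inverse identities $(\la\downarrow i)\uparrow(\la_i+1)=\la$ and $(\al\uparrow j)\downarrow(u_j(\al)+1)=\al$ further show that one sum is nonempty if and only if the other is.

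Next I would nail down the index correspondence. From $|\mu|=|\la|+i-\la_i-1$ one reads off $s=\la_i+1=|\la|-|\mu|+i$, and similarly $t=u_j(\al)+1=|\al|-|\be|+j$. Plugging these into the right-hand sign and using the fact that $2|\la|\equiv 0\pmod 2$,
\[
(-1)^{|\mu|+|\la|+s+t}=(-1)^{|\mu|+|\la|+|\la|-|\mu|+i+|\al|-|\be|+j}=(-1)^{|\al|-|\be|+i+j},
\]
which matches the left-hand sign. Hence Theorem~\ref{2todahierarchy}(iii) reduces, in the diagonal case, to the unsigned identity
\[
g_\mu(m)\,g_\be(k+1) = g_\la(m+1)\,g_\al(k)
\]
whenever $\mu=\la\downarrow i$, $\be=\al\uparrow j$, and $|\mu|+|\be|=|\la|+|\al|+m-k$; in all other configurations both sides of Theorem~\ref{2todahierarchy}(iii) are empty sums and the relation is $0=0$.

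To finish I would rename variables: put the statement's $\la$ in the role of Theorem~\ref{2todahierarchy}'s $\al$, the statement's $\la\uparrow i$ in the role of $\be$, the statement's $\mu$ and $\mu\downarrow j$ in the roles of Theorem~\ref{2todahierarchy}'s $\la$ and $\mu$, swap the two summation indices, and set $k=n-1$. The size constraint $|\la\downarrow i|+|\al\uparrow j|=|\la|+|\al|+m-k$ then becomes $|\la|+|\mu|=|\la\uparrow i|+|\mu\downarrow j|+n-m-1$, and the unsigned identity becomes the desired $g_{\la\uparrow i}(n)\,g_{\mu\downarrow j}(m)=g_\la(n-1)\,g_\mu(m+1)$. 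Both directions of the equivalence are covered: the only-if direction specializes Theorem~\ref{2todahierarchy}(iii), and the if direction recovers every Theorem~\ref{2todahierarchy}(iii) identity either from the single matching configuration above (where the hypothesis applies) or as a trivial $0=0$. The main bookkeeping hurdle is the sign matching, but the explicit expressions for $s$ and $t$ reduce it to the short parity check displayed above.
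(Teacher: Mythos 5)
Your proposal is correct and follows essentially the same route as the paper: substitute the diagonal ansatz into Theorem~\ref{2todahierarchy}(iii), observe that the inverse relations $(\la\downarrow i)\uparrow(\la_i+1)=\la$ and $(\al\uparrow j)\downarrow(u_j(\al)+1)=\al$ force each side to have at most one nonzero term, verify that the two signs agree via the size formulas for $\la\downarrow i$ and $\al\uparrow j$, and then reindex with $k=n-1$. Your sign check (expressing $s$ and $t$ as $|\la|-|\mu|+i$ and $|\al|-|\be|+j$) is a clean equivalent of the paper's computation with $\la_i$ and $u_j(\al)$, and your explicit remark that the remaining configurations give $0=0$ makes the ``if'' direction slightly more visible than in the paper's write-up.
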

\begin{proof}
	We begin with the characterization of the 2-Toda hierarchy given in Theorem~\ref{2todahierarchy}(iii). Recall that $\{\tau_m(\bfp,\bfq)\}_{m \in \bbZ}$
	is a solution to the 2-Toda hierarchy if and only if for all $m, n \in \bbZ$,  $\la, \mu, \al, \be \in \cP$ we have
		\begin{align}\label{charloc}
			\sum_{i,j} (-1)^{|\al|-|\al\uparrow j|+i+j} & a_\mu^{\la\downarrow i}(m) a_\be^{\al\uparrow j}(n+1) \\
					& = \sum_{s,t} (-1)^{|\mu| + |\mu\uparrow s|+s+t} a^\la_{\mu\uparrow s}(m+1) a^\al_{\be\downarrow t}(n), \notag
		\end{align}
	where the first sum is over $i, j \geq 1$ such that $|\la\downarrow i| + |\al\uparrow j| = |\la| + |\al| + m - n$ and the second sum
	is over $s, t \geq 1$ such that $|\mu\uparrow s| + |\be\downarrow t| = |\mu| + |\be| + n - m$.

	In order to have a non-trivial sum on the left hand side, it must be true that $\mu = \la \downarrow i$, $\be = \al \uparrow j$ for
	some suitable $\la, \al, i$ and $j$. However, using the fact that
	\[ (\al \uparrow j) \downarrow(u_j(\al) + 1) = \al, \]
	and
	\[ (\la \downarrow i) \uparrow (\la_i + 1) = \la, \]
	we see that $\la = \mu \uparrow(\la_i + 1)$ and $\al = \be \downarrow(u_j(\al) + 1)$ and so \eqref{charloc} becomes
	\[ (-1)^{|\al| + |\al\uparrow j| + i + j} g_{\la \downarrow i}(m) g_{\al \uparrow j}(n+1)
			= (-1)^{|\la \downarrow i| - |\la| + \la_i + u_j(\al)} g_\la(m+1) g_\al(n). \]
	Using the fact that
	\[ |\al \uparrow j| = |\al| + j + u_j(\al) - 1, \]
	we have $|\al| - |\al \uparrow j| + i + j = u_j(\al) + i + 1$ and using the fact that
	\[ |\la \downarrow i| = |\la| + i - \la_i - 1, \]
	we have $|\la \downarrow i| - |\la| + \la_i + u_j(\al) = u_j(\al) + i - 1$.
	
	Thus, we get
	\[ g_{\la\downarrow i}(m)g_{\al\uparrow j}(n+1) = g_\la(m+1) g_\al(n) \]
	where $i, j \geq 1$ are such that
	\[ |\la \downarrow i| + |\al \uparrow j| = |\la| + |\al| + m - n. \]
	
	After shifting $n \mapsto n - 1$ and reindexing, the result follows.
\end{proof}

We now describe the content-type series. We begin by defining the shifted content products and discussing some of their properties.

For $m, k \in \bbZ$ we define
	\[ Y(m, k) := \begin{cases}
								\prod_{j=1}^k y_{m+1-j}, & \mbox{ if } k \geq 1, \\
								1,											 & \mbox{ if } k = 0, \\
								Y(m-k, -k)^{-1},         & \mbox{ if } k \leq -1.
								\end{cases} \]
Also, for any $\la \in \cP$, $n \in \bbZ$,
	\[ Y_n(\la) := \prod_{i=1}^{\ell(\la)} Y(\la_i - i + n, \la_i). \]
Note that $Y_n(\la)$ is the shifted content product in the indeterminates $y_i$ for the partition $\la$, i.e.,
	\[ Y_n(\la) = \prod_{\square \in \la} y_{n+c(\square)} \]
where $c(\square)$ is the content of the cell $\square \in \la$.

\begin{lem}\label{contentlemmas}
	Suppose $j, k, s \in \bbZ$.
	\begin{enumerate}
		\item[(i)]	\[ \frac{Y(s,k)}{Y(s,j)} = \frac{1}{Y(s-k,j-k)} = Y(s-j,k-j), \]
		\item[(ii)] \[ \frac{Y(j,j)}{Y(k,k)} = Y(j,j-k), \]
		\item[(iii)] and, if $j, k \geq 0$,
								 \[ \frac{Y(s+j-k,j)}{Y(s,k)} = Y(s+j-k,j-k). \]
	\end{enumerate}
\end{lem}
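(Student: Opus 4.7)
The plan is to first establish a single telescoping/additivity identity for $Y(m,k)$ and then read off parts (i), (ii) and (iii) as immediate consequences, with no further case analysis.

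First I would record a uniform description of $Y(m,k)$ in $y_i^{\pm 1}$ which is valid for every sign of $k$. Writing out the piecewise definition gives, for $k\geq 1$, $Y(m,k)=y_m y_{m-1}\cdots y_{m-k+1}$, while for $k\leq -1$ the definition unfolds to $Y(m,k)=y_{m+1}^{-1}y_{m+2}^{-1}\cdots y_{m-k}^{-1}$. With this in hand, the central claim to prove is the additivity identity
\[ Y(m,a)\,Y(m-a,b) \;=\; Y(m,a+b) \qquad (a,b,m\in\bbZ). \]

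The main obstacle is to check this identity, and it is essentially the only real work in the lemma. I would handle it by cases on the signs of $a$, $b$, and $a+b$ (six cases up to symmetry, together with the trivial vanishing cases $a=0$ or $b=0$). In every case both sides become the same finite product of $y_i^{\pm1}$, after a short telescoping cancellation: the factors of $Y(m,a)$ have indices in the block $\{m-a+1,\ldots,m\}$ (inverted if $a<0$), those of $Y(m-a,b)$ lie in the block $\{m-a-b+1,\ldots,m-a\}$ (inverted if $b<0$), and after cancelling opposite powers one is left precisely with the product defining $Y(m,a+b)$.

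Given additivity, each part follows in one line. For (i), applying additivity with $(a,b)=(j,k-j)$ at $m=s$ gives $Y(s,j)\,Y(s-j,k-j)=Y(s,k)$, so $Y(s,k)/Y(s,j)=Y(s-j,k-j)$; applying it again with $(a,b)=(j-k,k-j)$ at $m=s-k$ yields $Y(s-k,j-k)\,Y(s-j,k-j)=Y(s-k,0)=1$, so $Y(s-j,k-j)=Y(s-k,j-k)^{-1}$, completing the chain. For (ii), additivity with $(a,b)=(j-k,k)$ at $m=j$ gives $Y(j,j-k)\,Y(k,k)=Y(j,j)$, which is exactly $Y(j,j)/Y(k,k)=Y(j,j-k)$. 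For (iii), additivity with $(a,b)=(j-k,k)$ at $m=s+j-k$ gives $Y(s+j-k,j-k)\,Y(s,k)=Y(s+j-k,j)$, which rearranges to the stated formula (the hypothesis $j,k\geq 0$ just ensures the displayed quotient is unambiguous but is not needed for the algebraic identity).
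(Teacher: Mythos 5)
Your proof is correct. The paper disposes of this lemma in a single sentence --- ``separate into cases and apply the definition'' --- i.e.\ it checks each of (i), (ii), (iii) directly by a sign analysis on $j$, $k$, $s$. You instead factor all of that case work through one additivity (cocycle) identity $Y(m,a)\,Y(m-a,b)=Y(m,a+b)$ for all $a,b,m\in\bbZ$, verified once by telescoping the explicit products $Y(m,k)=y_m\cdots y_{m-k+1}$ for $k\geq 1$ and $Y(m,k)=y_{m+1}^{-1}\cdots y_{m-k}^{-1}$ for $k\leq -1$ (both of which you correctly unfold from the definition). The three parts then drop out from the substitutions you list, and I have checked each one. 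This is a genuinely cleaner organization of the same computation: the sign/case analysis is performed exactly once rather than three times, the three identities become one-line corollaries of a single named fact, and as a bonus you observe that part (iii) holds without the hypothesis $j,k\geq 0$ --- the paper's restriction is harmless but, as your argument shows, unnecessary for the algebraic identity itself. The underlying cancellations are identical in both approaches; yours simply isolates the statement that makes them all instances of one fact, which would also be the reusable form if these manipulations were needed again later.
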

\begin{proof}
	Each of the identities follows in a straightforward way by first separating into cases and then applying the definition.
\end{proof}

For any $n \in \bbZ$, define
	\[ \ta_n := \begin{cases}
								ab^ny_0^{n/2} \prod_{i=1}^{n-1}Y(i,i), & \mbox{ if } n > 0, \\
								a,                                     & \mbox{ if } n = 0, \\
								ab^ny_0^{n/2} \prod_{i=1}^{-n-1}Y(-i-1,-i-1)^{-1}, & \mbox{ if } n < 0.
							\end{cases} \]
We now define a sequence of formal power series which we call content-type series. For $n \in \bbZ$, define
	\[ \Phi_n(\bfp, \bfq; a, b, \bfy) = \sum_{\la \in \cP} \ta_n Y_n(\la) s_\la(\bfp)s_\la(\bfq). \]
Our aim now is to show that the sequence of formal power series $\{\Phi_n\}_{n\in\bbZ}$ is a solution to the 2-Toda hierarchy. Note that this
result was originally proven by Orlov and Shcherbin in \cite{OS1}, however, their approach is different from ours.

\begin{lem}\label{constantratio}
	For any integer $m$ we have
		\[ \frac{\ta_{m+1}}{\ta_m} = by_0^{1/2}Y(m,m). \]
\end{lem}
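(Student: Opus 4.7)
The plan is a direct verification by case analysis on the sign of $m$, using only the piecewise definition of $\ta_n$ and the definition of $Y(m,k)$ from the lemma. In every case I expect the prefactor $ab^n y_0^{n/2}$ to contribute $by_0^{1/2}$ upon taking the ratio, and the ratio of the two shifted content products to collapse to $Y(m,m)$.

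First I would handle the generic positive case $m \geq 1$, where both $\ta_{m+1}$ and $\ta_m$ come from the $n > 0$ branch. The computation is then
\[
\frac{\ta_{m+1}}{\ta_m}
\;=\; \frac{a b^{m+1} y_0^{(m+1)/2}\,\prod_{i=1}^{m} Y(i,i)}{a b^{m} y_0^{m/2}\,\prod_{i=1}^{m-1} Y(i,i)}
\;=\; b y_0^{1/2}\, Y(m,m),
\]
since the two products differ only in the single factor $Y(m,m)$ occurring at $i = m$.

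Next I would treat $m \leq -2$, where both $\ta_{m+1}$ and $\ta_m$ come from the $n < 0$ branch. The prefactor ratio again gives $by_0^{1/2}$, and the two products $\prod_{i=1}^{-m-2}Y(-i-1,-i-1)^{-1}$ and $\prod_{i=1}^{-m-1}Y(-i-1,-i-1)^{-1}$ differ by the single factor $Y(m,m)^{-1}$ (at $i = -m-1$), whose reciprocal supplies the desired $Y(m,m)$.

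Finally I would dispose of the two boundary values $m = 0$ and $m = -1$, where $\ta_0 = a$ sits in the middle branch and the third-branch product is empty. These are handled by direct substitution, invoking the convention $Y(0,0) = 1$ and, where needed, part~(ii) of Lemma~\ref{contentlemmas} to rewrite $Y(m,m)$ across branches via $Y(m,k) = Y(m-k,-k)^{-1}$. I do not anticipate any conceptual obstacle; the only thing that requires care is keeping the empty-product conventions and the inverse convention for negative $k$ consistent at the two boundaries, which is really just an accounting exercise.
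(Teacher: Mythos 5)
Your approach is the same as the paper's: split on the sign of $m$, read $by_0^{1/2}$ off the prefactor, and let the two content products telescope to the single factor $Y(m,m)$. The cases $m\ge 1$, $m\le -2$ and $m=0$ are correct exactly as you describe them. The trouble is the remaining boundary case $m=-1$, which you wave off as ``really just an accounting exercise'': if you actually do the accounting with the stated definition of $\ta_n$, it fails. Since $\ta_{-1}=ab^{-1}y_0^{-1/2}\prod_{i=1}^{0}Y(-i-1,-i-1)^{-1}=ab^{-1}y_0^{-1/2}$ (the product is empty), direct substitution gives
\[ \frac{\ta_0}{\ta_{-1}}=\frac{a}{ab^{-1}y_0^{-1/2}}=by_0^{1/2}, \]
whereas the claimed right-hand side is
\[ by_0^{1/2}\,Y(-1,-1)=by_0^{1/2}\,Y(0,1)^{-1}=by_0^{1/2}y_0^{-1}=by_0^{-1/2}. \]
Here $Y(-1,-1)=y_0^{-1}$, not $1$, so neither the convention $Y(0,0)=1$ nor Lemma~\ref{contentlemmas}(ii) rescues the step: the two sides differ by a factor of $y_0$.

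The root cause is an off-by-one in the $n<0$ branch of the definition of $\ta_n$: the product $\prod_{i=1}^{-n-1}Y(-i-1,-i-1)^{-1}$ equals $\prod_{i=2}^{-n}Y(-i,-i)^{-1}$ and so omits the factor $Y(-1,-1)^{-1}=y_0$, while the recursion $\ta_{m+1}=by_0^{1/2}Y(m,m)\,\ta_m$ started from $\ta_0=a$ forces $\ta_n=ab^ny_0^{n/2}\prod_{i=1}^{-n}Y(-i,-i)^{-1}$ for $n<0$. With that corrected definition your $m\le -2$ computation goes through verbatim and $m=-1$ really is the trivial substitution you promise. The paper's own proof has the same blind spot: it writes one generic $m<0$ ratio and cancels a factor ``at $i=-m-1$'' that is simply absent from the denominator when $m=-1$. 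So your write-up is right in spirit and even more careful than the paper's in isolating the boundary cases, but the one case you defer to bookkeeping is precisely where the argument (and the lemma, as literally stated) breaks; you need either to repair the definition of $\ta_n$ or to treat $m=-1$ explicitly and report the discrepancy.
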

\begin{proof}
	If $m \geq 0$ then
		\begin{align*}
			\frac{\ta_{m+1}}{\ta_m} &= \frac{ab^{m+1}y_0^{(m+1)/2}\prod_{i=1}^{m}Y(i,i)}{ab^{m}y_0^{m/2}\prod_{i=1}^{m-1}Y(i,i)}, \\
													&= by_0^{1/2}Y(m,m).
		\end{align*}
	If $m < 0$ we have
		\begin{align*}
			\frac{\ta_{m+1}}{\ta_m} &= \frac{ab^{m+1}y_0^{(m+1)/2}\prod_{i=1}^{-m-2} Y(-i-1,-i-1)^{-1}}{ab^{m}y_0^{m/2}\prod_{i=1}^{-m-1} Y(-i-1,-i-1)^{-1}}, \\
													&= by_0^{1/2} Y(m,m).
		\end{align*}
\end{proof}

\begin{lem}\label{uparrowratio}
	For integer $n$, integer $i > 0$ and partition $\la$,
		\[ \frac{Y_n(\la\uparrow i)}{Y_{n-1}(\la)} = Y(|\la\uparrow i| - |\la| + n-1, |\la\uparrow i| - |\la|). \]
\end{lem}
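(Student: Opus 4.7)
My plan is to establish the identity by computing the ratio $Y_n(\la\uparrow i)/Y_{n-1}(\la)$ cell by cell, using the content-product formula $Y_n(\la) = \prod_{\square \in \la} y_{n + c(\square)}$ recorded just before the lemma. Writing $u = u_i(\la)$, the definition of $\la\uparrow i$ splits its cells into three natural blocks according to whether the row index $r$ satisfies $r \leq u$, $r = u+1$, or $r > u+1$; the corresponding row lengths are $\la_r - 1$, $i - 1$, and $\la_{r-1}$.

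The first observation is that the ``tail'' block cancels against the matching tail of $\la$. Indeed, the shift $(r,c) \mapsto (r-1,c)$ gives a bijection between the cells of $\la\uparrow i$ with $r > u+1$ and the cells of $\la$ with row index greater than $u$, and this map decreases the content by exactly $1$. Hence the factor $y_{n + c(\square)}$ contributed by such a cell to the numerator equals the factor $y_{(n-1) + c(\square')}$ contributed by its partner to the denominator, and these factors cancel out of the ratio.

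For each remaining row $r \in \{1, \ldots, u\}$, a direct reindexing $c \mapsto c+1$ in the numerator yields
\[ \frac{\prod_{c=1}^{\la_r - 1} y_{n + c - r}}{\prod_{c=1}^{\la_r} y_{(n-1) + c - r}} \;=\; \frac{1}{y_{n-r}}, \]
while the inserted row $r = u+1$ contributes $\prod_{c=1}^{i-1} y_{n + c - u - 1}$ to the numerator. Multiplying these together collapses the ratio to
\[ \frac{Y_n(\la\uparrow i)}{Y_{n-1}(\la)} \;=\; \prod_{r=1}^{u} y_{n-r}^{-1} \cdot \prod_{c=1}^{i-1} y_{n + c - u - 1} \;=\; \frac{Y(n + i - u - 2,\, i - 1)}{Y(n - 1,\, u)}, \]
where the second equality is immediate from the definition of $Y(\cdot,\cdot)$.

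To finish, I would invoke Lemma~\ref{contentlemmas}(iii) with $s = n-1$, $j = i-1$, $k = u$, which rewrites this ratio as $Y(n + i - u - 2,\, i - u - 1)$. Combining with the identity $|\la\uparrow i| - |\la| = i - u - 1$ then produces exactly the right-hand side of the claim. The main care needed is in the edge case $i = 1$, where the inserted row is empty and some rows of $\la$ of length $1$ become empty rows of $\la\uparrow 1$; however, the symbolic products above reduce correctly to empty products in this regime, and Lemma~\ref{contentlemmas}(iii) continues to apply at $j = 0$, so no separate treatment is required.
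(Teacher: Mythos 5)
Your proof is correct and follows essentially the same route as the paper's: both split the rows of $\la\uparrow i$ into the block $r \leq u_i(\la)$, the inserted row, and the tail, cancel the tail against $\la$, reduce each remaining row to a factor $y_{n-r}^{-1}$, and finish by applying Lemma~\ref{contentlemmas}(iii) together with $|\la\uparrow i| - |\la| = i - u_i(\la) - 1$. The only cosmetic difference is that you compute the per-row ratios by direct cancellation of the $y$-factors in the cell-content product, where the paper invokes Lemma~\ref{contentlemmas}(i) on the row products $Y(\la_k - k + n, \la_k)$.
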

\begin{proof}
	We have
		\begin{align*}
			\frac{Y_n(\la\uparrow i)}{Y_{n-1}(\la)}
					&= Y(i-1-(u_i(\la)+1)+n)
									\left[ \frac{ \prod_{k=1}^{u_i(\la)} Y(\la_k - 1 - k + n, \la_k - 1) }{ \prod_{k=1}^{u_i(\la)} Y(\la_k - k + n - 1, \la_k)} \right] \\
					&\qquad \qquad \times \left[ \frac{ \prod_{k=u_i(\la)+1}^{\ell(\la)} Y(\la_k - (k+1) + n, \la_k)}{ \prod_{k=u_i(\la)+1}^{\ell(\la)} Y(\la_k-k+n-1,\la_k)} \right], \\
					&= \frac{Y(i-u_i(\la)-1+n-1,i-1)}{\prod_{k=1}^{u_i(\la)} Y(n-k,1)},
		\end{align*}
	where the denominator comes from an application of Lemma~\ref{contentlemmas}(i). Simplifying and applying Lemma~\ref{contentlemmas}(iii) gives
		\begin{align*}
			\frac{Y_n(\la\uparrow i)}{Y_{n-1}(\la)}
					&= \frac{Y(i-u_i(\la)-1+n-1,i-1)}{Y(n-1,u_i(\la))}, \\
					&= Y(i-u_i(\la)-1+n-1,i-u_i(\la)-1), \\
					&= Y(|\la\uparrow i|-|\la| + n-1, |\la\uparrow i|-|\la|).
		\end{align*}
\end{proof}

\begin{lem}\label{downarrowratio}
	For integer $m$, integer $j > 0$ and partition $\mu$, we have
		\[ \frac{Y_{m+1}(\mu)}{Y_m(\mu\downarrow j)} = Y(|\mu| - |\mu\downarrow j| + m, |\mu| - |\mu\downarrow j|). \]
\end{lem}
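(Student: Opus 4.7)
The plan is to mirror the proof of Lemma~\ref{uparrowratio}: expand both $Y_{m+1}(\mu)$ and $Y_m(\mu\downarrow j)$ row by row, cancel the tail, apply Lemma~\ref{contentlemmas}(i) to collapse the ratio over the first $j-1$ rows, then finish with Lemma~\ref{contentlemmas}(iii). Concretely, after extending $\mu_k = 0$ for $k > \ell(\mu)$ so everything is an eventually-trivial infinite product, the parts of $\mu\downarrow j$ are $\mu_k + 1$ for $k < j$ and $\mu_{k+1}$ for $k \geq j$. Reindexing $k \mapsto k+1$ in the tail of $Y_m(\mu\downarrow j)$ turns $Y(\mu_{k+1} - k + m, \mu_{k+1})$ into $Y(\mu_k - k + m + 1, \mu_k)$ for $k \geq j+1$, which is precisely the tail of $Y_{m+1}(\mu)$, and these tail factors cancel.

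After that cancellation, the row-$j$ factor $Y(\mu_j - j + m + 1, \mu_j)$ from $Y_{m+1}(\mu)$ survives alone in the numerator, while the rows $1, \ldots, j-1$ contribute the ratio
\[ \prod_{k=1}^{j-1} \frac{Y(\mu_k - k + m + 1, \mu_k)}{Y(\mu_k - k + m + 1, \mu_k + 1)}. \]
By Lemma~\ref{contentlemmas}(i) in the form $Y(s,k)/Y(s,j) = 1/Y(s-k, j-k)$, each factor equals $1/Y(m-k+1, 1)$, independent of $\mu_k$; and directly from the definition of $Y$, $\prod_{k=1}^{j-1} Y(m-k+1, 1) = y_m y_{m-1} \cdots y_{m-j+2} = Y(m, j-1)$. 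Hence the ratio has reduced to $Y(\mu_j - j + m + 1, \mu_j)/Y(m, j-1)$. Since $m + \mu_j - (j-1) = \mu_j - j + m + 1$, Lemma~\ref{contentlemmas}(iii) applies (with $s = m$, upper index $\mu_j$, lower index $j-1$, both nonnegative) and yields $Y(\mu_j - j + m + 1, \mu_j - j + 1)$. Combined with the identity $|\mu\downarrow j| = |\mu| + j - \mu_j - 1$ already recorded in the excerpt, which gives $|\mu| - |\mu\downarrow j| = \mu_j - j + 1$, this is exactly the claimed expression.

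The only real obstacle is bookkeeping: keeping the index shift in $\mu\downarrow j$ straight and handling the edge cases $j = 1$ (the product over $k = 1, \ldots, j-1$ is empty and $Y(m, 0) = 1$) and $j > \ell(\mu)$ (where $\mu_j = 0$ and several factors collapse). Extending $\mu$ by trailing zeros and using the convention $Y(\cdot, 0) = 1$ makes both cases uniform, so the argument proceeds in parallel to that of Lemma~\ref{uparrowratio}, with the explicit index $j$ playing the role that $u_i(\la) + 1$ plays there.
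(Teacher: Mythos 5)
Your proof is correct and follows essentially the same route as the paper's: the same row-by-row decomposition into head rows $1,\ldots,j-1$, the singled-out row $j$, and the cancelling tail, followed by the same applications of Lemma~\ref{contentlemmas}(i) to reduce each head factor to $1/Y(m-k+1,1)$ and of Lemma~\ref{contentlemmas}(iii) to collapse $Y(\mu_j-j+m+1,\mu_j)/Y(m,j-1)$. The only difference is that you spell out the edge cases and the tail reindexing explicitly, which the paper leaves implicit.
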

\begin{proof}
	We have
		\begin{align*}
			\frac{Y_{m+1}(\mu)}{Y_m(\mu\downarrow j)}
					&= Y(\mu_j - j + m + 1, \mu_j)
								\left[ \frac{\prod_{k=1}^{j-1} Y(\mu_k-k+m+1,\mu_k)}{\prod_{k=1}^{j-1}Y(\mu_k-k+m+1,\mu_k+1)} \right] \\
					&\qquad \qquad \times \left[ \frac{\prod_{k=j+1}^{\ell(\mu)} Y(\mu_k-k+m+1,\mu_k)}{\prod_{k=j+1}^{\ell(\mu)}Y(\mu_k-(k-1)+m,\mu_k)} \right], \\
					&= \frac{Y(\mu_j-j+m+1,\mu_j)}{\prod_{k=1}^{j-1}Y(m-k+1,1)},
		\end{align*}
	where the denominator comes from an application of Lemma~\ref{contentlemmas}(i). After simplifying and applying Lemma~\ref{contentlemmas}(iii) we have
		\begin{align*}
			\frac{Y_{m+1}(\mu)}{Y_m(\mu\downarrow j)}
					&= \frac{Y(\mu_j-j+m+1,\mu_j)}{Y(m,j-1)}, \\
					&= Y(\mu_j-j+m+1,\mu_j-j+1), \\
					&= Y(|\mu|-|\mu\downarrow j|+m, |\mu|-|\mu\downarrow j|).
		\end{align*}
\end{proof}

\begin{thm}\label{contentsolution}
	The sequence of formal power series $\{\Phi_n(\bfp,\bfq;a,b,\bfy)\}_{n\in\bbZ}$ is a solution to the 2-Toda hierarchy.
\end{thm}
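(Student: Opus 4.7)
The plan is to invoke Theorem~\ref{diagonalsolutions} with $g_\la(n) = \ta_n\,Y_n(\la)$, reducing the claim to the assertion that
\[ \ta_n\,Y_n(\la\uparrow i)\cdot\ta_m\,Y_m(\mu\downarrow j) \;=\; \ta_{n-1}\,Y_{n-1}(\la)\cdot\ta_{m+1}\,Y_{m+1}(\mu) \]
for every $\la,\mu\in\cP$, every $n,m\in\bbZ$, and every $i,j\ge 1$ subject to the size constraint
\[ |\la\uparrow i|+|\mu\downarrow j| = |\la|+|\mu|+m-n+1. \]
Separating the $\ta$-factors from the $Y$-factors, the identity to prove becomes
\[ \frac{\ta_n}{\ta_{n-1}}\cdot\frac{\ta_m}{\ta_{m+1}}\cdot\frac{Y_n(\la\uparrow i)}{Y_{n-1}(\la)}\cdot\frac{Y_m(\mu\downarrow j)}{Y_{m+1}(\mu)} \;=\; 1, \]
so the preparatory lemmas immediately apply: Lemma~\ref{constantratio} evaluates the first two ratios, Lemma~\ref{uparrowratio} evaluates the third, and Lemma~\ref{downarrowratio} evaluates the fourth.

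First I would introduce the shorthand $A := |\la\uparrow i|-|\la|$ and $B := |\mu|-|\mu\downarrow j|$. Substituting the four lemmas the claim becomes
\[ Y(n-1,n-1)\cdot Y(A+n-1,A) \;=\; Y(m,m)\cdot Y(B+m,B), \]
after the factors of $by_0^{1/2}$ cancel. The size constraint translates cleanly into $A+n-1=B+m$, which will turn out to be precisely the bookkeeping needed to match both sides.

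Next I would combine the two $Y$-factors on each side using Lemma~\ref{contentlemmas}(i), which in the form $Y(s,k)\cdot Y(s-k,j) = Y(s,k+j)$ is valid for all integers. On the left, taking $s=A+n-1$, $k=A$, $j=n-1$ gives $Y(A+n-1,A+n-1)$; on the right, taking $s=B+m$, $k=B$, $j=m$ gives $Y(B+m,B+m)$. Since $A+n-1=B+m$, both sides agree and the identity is established.

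The main obstacle, such as it is, is purely bookkeeping: one must trust that the definitions of $Y(s,k)$ and $\ta_n$ in all three cases ($k>0,\,k=0,\,k<0$ and $n>0,\,n=0,\,n<0$) are consistent enough for Lemma~\ref{contentlemmas}(i) to be applied without a case split. This consistency is exactly what Lemma~\ref{contentlemmas} is designed to provide, so once the reduction above is written down there is nothing further to check.
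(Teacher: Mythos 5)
Your proof is correct and takes essentially the same route as the paper: reduce via Theorem~\ref{diagonalsolutions} to the single product identity, then evaluate the four ratios with Lemmas~\ref{constantratio}, \ref{uparrowratio}, and \ref{downarrowratio}. The only (cosmetic) difference is in the final bookkeeping, where you merge each side into $Y(A+n-1,A+n-1)=Y(B+m,B+m)$ using the multiplicative form of Lemma~\ref{contentlemmas}(i), whereas the paper cancels both sides down to $Y(m,m+1-n)$ using parts (i) and (ii); both are valid uses of the same identities.
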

\begin{proof}
	By Theorem~\ref{diagonalsolutions} we need to show that for all partitions $\la, \mu$ and integers $i, j, n, m$ such that $i, j \geq 1$ and
		\[ |\la| + |\mu| = |\la\uparrow i| + |\mu\downarrow j| + n -m - 1, \]
	we have
		\[ g_{\la\uparrow i}(n)g_{\mu\downarrow j}(m) = g_\la(n-1)g_\mu(m+1). \]
	In the case of the content-type series, this becomes
		\[ \ta_n Y_n(\la\uparrow i) \ta_m Y_m(\mu\downarrow j) = \ta_{n-1} Y_{n-1}(\la) \ta_{m+1} Y_{m+1}(\mu), \]
	or, after rearranging,
		\begin{equation}\label{goaleqn}
			\frac{ Y_n(\la\uparrow i) / Y_{n-1}(\la) }{ Y_{m+1}(\mu) / Y_m(\mu\downarrow j) } = \frac{ \ta_{m+1} / \ta_m }{ \ta_n / \ta_{n-1} }.
		\end{equation}
	So, if we can show that the identity \eqref{goaleqn} holds then we are done.
	
	Using Lemma~\ref{uparrowratio} and Lemma~\ref{downarrowratio}, the left hand side of \eqref{goaleqn} becomes
		\[ \frac{Y(|\la\uparrow i| - |\la| + n - 1, |\la\uparrow i| - |\la|)}{Y(|\mu|-|\mu\downarrow j| + m, |\mu| - |\mu\downarrow j|)}. \]
	Using the fact that $|\la| + |\mu| = |\la\uparrow i| + |\mu\downarrow j| + n - m - 1$, the left hand side of \eqref{goaleqn} then becomes
		\[ \frac{Y(|\la\uparrow i|-|\la|+n - 1,|\la\uparrow i|-|\la|)}{Y(|\la\uparrow i|-|\la|+n - 1,|\la\uparrow i|-|\la| -m - 1 + n)}. \]
	Applying Lemma~\ref{contentlemmas}(i) then tells us that the left hand side of \eqref{goaleqn} is
		\begin{equation}\label{missionaccomplished} Y(m,m+1-n). \end{equation}
	
	Now, using Lemma~\ref{constantratio}, the right hand side of \eqref{goaleqn} becomes
		\[ \frac{Y(m,m)}{Y(n-1,n-1)}. \]
	Applying Lemma~\ref{contentlemmas}(ii) then gives \eqref{missionaccomplished}. Hence \eqref{goaleqn} is satisfied and the content-type series solves
	the 2-Toda hierarchy.
\end{proof}

We now give a partial converse to this theorem, showing that many diagonal series that arise as solutions to the 2-Toda hierarchy are in fact content-type series.

\begin{thm}
	For $n \in \bbZ$ suppose that
		\[ \tau_n = \sum_{\la \in \cP} g_\la(n) s_\la(\bfp) s_\la(\bfq), \]
	where $g_\la(n), \la \in \cP, n \in \bbZ$ are scalars and that $\{\tau_n(\bfp,\bfq)\}_{n\in\bbZ}$ is a solution to the 2-Toda hierarchy. Define a sequence
	$\bfu = (\cdots, u_{-1}, u_0, u_1, \cdots)$ as follows. For $m \geq 0$,
	\begin{align*}
		g_{m+1}(0) &= u_m g_m(0), \\
		g_{1^{m+1}}(0) &= u_{-m} g_{1^m}(0).
	\end{align*}
	If the sequences $\{g_m(0)\}_{m\geq 0}$ and $\{g_{1^m}(0)\}_{m\geq 0}$ can be uniquely recovered from $g_\ep(0)$ and the sequence $\bfu$ and
	if $g_\ep(0), g_1(0) \not = 0$ then for all $n \in \bbZ$,
		\[ \tau_n = \Phi_n\left(\bfp,\bfq; g_\ep(0), \frac{g_\ep(1)}{u_0^{1/2} g_\ep(0)}, \bfu\right). \]
\end{thm}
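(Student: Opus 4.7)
The plan is to define $a = g_\ep(0)$, $b = g_\ep(1)/(u_0^{1/2} g_\ep(0))$, $\bfy = \bfu$, write $h_\la(n) := \ta_n Y_n(\la)$ for the coefficients of $\Phi_n(\bfp, \bfq; a, b, \bfu)$, and prove $g_\la(n) = h_\la(n)$ for every partition $\la$ and every integer $n$. Because Theorem~\ref{contentsolution} says $\{\Phi_n\}$ is a 2-Toda solution, both families of coefficients satisfy the recursion of Theorem~\ref{diagonalsolutions}, which I would apply in a succession of specialized forms.

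The first step is to match the boundary data. A direct computation from the definition of $Y_0$ on hook shapes gives $h_{(r)}(0) = g_\ep(0)\, u_0 u_1 \cdots u_{r-1}$ and $h_{(1^r)}(0) = g_\ep(0)\, u_0 u_{-1} \cdots u_{-(r-1)}$, which coincide with $g_{(r)}(0)$ and $g_{(1^r)}(0)$ by the telescoping defining relations of $\bfu$. The choices of $a$ and $b$ were engineered so that $h_\ep(0) = g_\ep(0)$ and $h_\ep(1) = g_\ep(1)$.

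Next, and this is the technical heart, I would propagate the matching to $g_\ep(n)$ for all $n \in \bbZ$. Applying Theorem~\ref{diagonalsolutions} with $\la = \ep$, $i = r+1$, $\mu = \ep$, $j = 1$ forces $m = n+r-1$ and yields
\begin{equation*}
g_{(r)}(n)\, g_\ep(n+r-1) = g_\ep(n-1)\, g_\ep(n+r).
\end{equation*}
Setting $n = 0$ gives a two-term recursion for $g_\ep(r)$; the case $r = 1$ first extracts $g_\ep(-1) = g_\ep(0) g_1(0)/g_\ep(1)$, and the unique-recoverability hypothesis keeps the relevant denominators nonzero throughout. Unraveling and invoking Lemma~\ref{constantratio} identifies the resulting product for $g_\ep(r)$ with $\ta_r = h_\ep(r)$ for $r \geq 0$; the dual specialization with $\mu = \ep$, $j = r+1$ extends this identification to negative indices.

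The final step is induction on $|\la|$. For $\la \neq \ep$, set $\al = (\la_2, \la_3, \ldots)$ and $i = \la_1 + 1$; a quick check gives $u_i(\al) = 0$ and so $\al \uparrow i = \la$ with $|\al| = |\la| - \la_1 < |\la|$. Applying Theorem~\ref{diagonalsolutions} with $\be = \ep$, $j = 1$ produces
\begin{equation*}
g_\la(n) = \frac{g_\al(n-1)\, g_\ep(n-\la_1)}{g_\ep(n-\la_1-1)},
\end{equation*}
and the identical identity holds for $h_\la(n)$ since $h$ obeys the same recursion. By the inductive hypothesis on $\al$ and the previous identification of $g_\ep$ with $h_\ep$, the two right-hand sides coincide, so $g_\la(n) = h_\la(n)$. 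The main obstacle is the third paragraph — turning the 2-Toda recursion into an explicit product formula for $g_\ep(n)$ and verifying via Lemma~\ref{constantratio} that it matches $\ta_n$ — where the unique-recoverability hypothesis is precisely what rules out division by zero.
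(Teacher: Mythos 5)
Your overall strategy --- reconstruct every coefficient from the boundary data $g_\ep(0)$, $g_\ep(1)$, $\{g_m(0)\}$, $\{g_{1^m}(0)\}$ using the diagonal recursion of Theorem~\ref{diagonalsolutions}, then invoke the fact that $\Phi_n$ satisfies the same recursion with the same boundary data --- is exactly the paper's, and your first three paragraphs are essentially sound (every division there is by $g_\ep(\pm 1)$, which are nonzero because $g_\ep(-1)g_\ep(1)=g_1(0)g_\ep(0)\neq 0$). The gap is in the final induction. First, a small slip: with $\al=(\la_2,\la_3,\ldots)$, $i=\la_1+1$, $\mu=\ep$, $j=1$, the constraint forces $m=n+\la_1-1$, so the identity reads $g_\la(n)\,g_\ep(n+\la_1-1)=g_\al(n-1)\,g_\ep(n+\la_1)$, not $n-\la_1$. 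Much more seriously, this identity only determines $g_\la(n)$ if $g_\ep(n+\la_1-1)\neq 0$, and nothing in the hypotheses guarantees that. The theorem explicitly allows $g_m(0)=0$ for $m\geq M$ (the unique-recoverability clause is designed for exactly this), and then your own paragraph three shows $g_\ep(m)=0$ for $m\geq M$. Concretely, take $a=b=1$, $u_1=0$, $u_i=1$ otherwise: all hypotheses hold, $g_\ep(2)=0$, and your identity for, say, $g_{(1,1)}(3)$ degenerates to $0=0$, pinning down nothing. The same instance of the recursion is equally uninformative for $h_\la(n)$, so you cannot conclude $g_\la(n)=h_\la(n)$.

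The paper avoids this by never dividing by anything other than $g_\ep(1)$ or $g_\ep(-1)$. It first reduces all coefficients to level $0$ via
\[ g_\ep(-1)\,g_{(r,\eta)}(m+1)=g_{r+m+1}(0)\,g_\eta(m) \quad\text{and}\quad g_\ep(1)\,g_{1^s+\eta}(n-1)=g_{1^{s-n+1}}(0)\,g_\eta(n), \]
which strip the first row (resp.\ a full column) while stepping the level toward $0$; then it determines $g_\la(0)$ by an induction on $|\la|$ that \emph{alternates} between stripping the first row (landing at level $-1$) and stripping the first column (landing back at level $0$), via the $m=-1$ and $n=0$ specializations of the two displays above. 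Each step multiplies by known boundary data and divides only by $g_\ep(\mp 1)$. To repair your argument you would need to replace your single row-stripping identity by this two-step row/column alternation (or otherwise show your denominators are nonzero, which is false in general).
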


Note that the condition on the sequence $\bfu$ is essentially that if $g_m(0) = 0$ then $g_M(0) = 0$ for all $M \geq m$ and similarly for $g_{1^m}(0)$. Also
note that the choice of 0 in the above statement is not special since the conditions on the coefficients of solutions to the 2-Toda hierarchy are
invariant under the translation $\tau_n \mapsto \tau_{n+1}$.

\begin{proof}
	The approach taken will be to show that if the conditions in the theorem are satisfied then each of the coefficients $g_\la(n)$ can be constructed from the
	sequence $\bfu$ along with $g_\ep(0)$ and $g_\ep(1)$. The result then follows from the fact that $\Phi_n$ also satisfies the conditions in the theorem
	and that if $\tau_n = \Phi_n$ then for $m \geq 0$,
		\[ g_{m+1}(0) = \ta_0 Y_0(m+1) = y_m \ta_0 Y_0(m) = y_m g_m(0), \]
		\[ g_{1^{m+1}}(0) = \ta_0 Y_0(1^{m+1}) = y_{-m} \ta_0 Y_0(1^m) = y_{-m} g_{1^m}(0), \]
	and that $g_\ep(0) = \ta_0 Y_0(\ep) = a$, and $g_\ep(1) = \ta_1 Y_1(\ep) = a b y_0^{1/2}.$
	
	In the following we will use the notation $(r, \eta)$ where $\eta \in \cP$ and $r \geq \eta_1$ to denote the partition $(r, \eta_1, \eta_2, \cdots)$ and
	we will use the notation $1^s + \eta$ where $\eta \in \cP$ and $s \geq \ell(\eta)$ to denote the partition $(\eta_1 + 1, \cdots, \eta_s + 1)$ with
	the convention that $\eta_i = 0$ if $i > \ell(\eta)$.
	
	Since $\{\tau_n\}_{n\in\bbZ}$ satisfies the 2-Toda hierarchy and is a diagonal solution, we know that for any partitions $\la, \mu \in \cP$, $m, n \in \bbZ$ and
	integers $i,j \geq 1$ such that
		\[ |\la| + |\mu| = |\la \uparrow i| + |\mu \downarrow j| + n - m - 1, \]
	the identity
		\begin{equation}\label{identityA}
			g_{\la\uparrow i}(n) g_{\mu \downarrow j}(m) = g_\la(n-1) g_\mu(m+1)
		\end{equation}
	holds.
	
	If we choose $n = m = 0, i = 2, j = 1, \la = \mu = \ep$ in \eqref{identityA} then we get
		\[ g_1(0) g_\ep(0) = g_\ep(-1) g_\ep(1). \]
	Since $g_1(0), g_\ep(0) \not = 0$ we know that $g_\ep(1), g_\ep(-1) \not = 0$.
	
	If we choose $n = 0, \la = \ep$ in \eqref{identityA} then we get
		\[ g_k(0) g_{\mu\downarrow j}(m) = g_\ep(-1)g_\mu(m+1), \]
	where
		\[ k = |\mu| - |\mu \downarrow j| + m + 1. \]
	In particular, if $\eta \in \cP$, $r \geq \eta_1$ and $m \geq -1$, this implies that
		\begin{equation}\label{recursion1}
			g_\ep(-1)g_{(r,\eta)}(m+1) = g_{r+m+1}(0) g_\eta(m).
		\end{equation}
	Since $g_\ep(-1) \not = 0$, equation \eqref{recursion1} allows us to construct $g_\eta(n), n > 0$ inductively provided we know
	$g_\la(0)$ for any partition $\la$.
	
	Similarly, if we choose $m = 0, \mu = \ep$ in \eqref{identityA} then we get
		\[ g_{\la \uparrow i}(n) g_{1^k}(0) = g_\la(n-1)g_\ep(1), \]
	where
		\[ k = |\la| - |\la \uparrow i| - n + 1. \]
	In particular, if $\eta \in \cP$, $s \geq \ell(\eta)$ and $n \leq 1$ then
		\begin{equation}\label{recursion2}
			g_\ep(1)g_{1^s + \eta}(n-1) = g_{1^{s-n+1}}(0) g_\eta(n).
		\end{equation}
	Since $g_\ep(1) \not = 0$, equation \eqref{recursion2} allows us to construct $g_\eta(n), n < 0$ inductively provided we know
	$g_\la(0)$ for any partition $\la$.
	
	All that is left now is to show that $g_\la(0)$ can be constructed provided we know $g_m(0)$, $g_{1^m}(0)$, $g_\ep(0)$, $g_\ep(1)$ and $g_\ep(-1)$.
	
	If we choose $m = -1$ in \eqref{recursion1} then we have, for any $\eta \in \cP$ and $r \geq \eta_1$,
		\begin{equation}\label{recursion3}
			g_\ep(-1) g_{(r,\eta)}(0) = g_r(0) g_\eta(-1).
		\end{equation}
	If we choose $n = 0$ in \eqref{recursion2} then we have, for any $\eta \in \cP$ and $s \geq \ell(\eta)$,
		\begin{equation}\label{recursion4}
			g_\ep(1) g_{1^s + \eta}(-1) = g_{1^{s+1}}(0) g_\eta(0).
		\end{equation}
	Note that on the right hand side of both \eqref{recursion3} and \eqref{recursion4} the partition corresponding to the unknown coefficient is
	strictly smaller in size than the partition corresponding to the unknown coefficient on the left hand side. Thus, by induction, we can
	construct $g_\la(0)$ for every partition $\la$.
	
	To see how the last part about $g_\la(0)$ works, suppose we wish to determine $g_{42^21^3}(0)$. By repeatedly applying \eqref{recursion3} and
	\eqref{recursion4} we get
		\[ g_\ep(-1) g_{42^21^3}(0) = g_4(0) g_{2^21^3}(-1), \]
	and
		\[ g_\ep(1) g_{2^21^3}(-1) = g_{1^6}(0)g_{1^2}(0). \]
\end{proof}

Note that not all diagonal solutions to the 2-Toda hierarchy are content-type series. For example, it is not difficult to show that
if we define $\tau_0 = s_1(\bfp) s_1(\bfq) = p_1 q_1$ and $\tau_n = 0$ for $n \not = 0$ then $\{\tau_n\}_{n \in \bbZ}$ is a
solution to the 2-Toda hierarchy.

\section{Examples}

In this section we briefly discuss a few content-type series that have arisen in the literature. We begin with a family of series that was
presented in \cite{GJ1} and which encodes a number of enumerative generating functions.

For $a_1, a_2, \cdots \geq 0$ and $\al, \be \in \cP$ with $|\al| + |\be| = d$, let $\cB_{\al,\be}^{a_1, a_2, \cdots}$ be the set of tuples
of permutations $(\si, \ga, \pi_1, \pi_2, \cdots)$ on $\{1, \cdots, d\}$ such that
	\begin{enumerate}
		\item[(i)]	The permutation $\si$ has cycle type $\al$, $\ga$ has cycle type $\be$ and $d - \ell(\pi_i) = a_i$ for $i \geq 1$ where
								$\ell(\pi_i)$ is the number of cycles in the disjoint cycle decomposition of $\pi_i$;
		\item[(ii)]	$\si \ga \pi_1 \pi_2 \cdots = $ id.
	\end{enumerate}
Let $b_{\al,\be}^{a_1, a_2, \cdots}$ be the number of tuples in $\cB_{\al,\be}^{a_1, a_2, \cdots}$ (Note that our $b_{\al,\be}^{a_1,a_2,\cdots}$
is written $\tilde{b}_{\al,\be}^{a_1,a_2,\cdots}$ in \cite{GJ1}). The tuples counted by $b_{\al,\be}^{a_1,a_2,\cdots}$ are called
constellations in \cite{LZ1}.

If we now construct the generating function for these numbers,
	\[ B := \sum_{\substack{\al,\be \in \cP, \\ |\al| = |\be| = d \geq 1, \\ a_1, a_2, \cdots \geq 0}}
					\frac{1}{d!} b_{\al,\be}^{a_1, a_2, \cdots} p_\al p_\be u_1^{a_1} u_2^{a_2} \cdots, \]
then using representation theory (see \cite{GJ1} for details) it can be shown that
	\[ B = \left. \Phi_0 \right|_{a = b = 1, \,\, y_j = \prod_{i \geq 1} (1 + ju_i), \,\, j \in \bbZ}, \]
and hence via Theorem~\ref{contentsolution} can be embedded in a solution to the 2-Toda hierarchy.

Various specializations of $B$ give rise to generating functions for a number of different enumerative problems such as map and hypermap
enumeration. We choose to focus on only a single specialization here, namely to the double Hurwitz problem, and defer to \cite{GJ1}
for others.

Double Hurwitz numbers arise in the enumeration of branched covers of the sphere through an encoding due to Hurwitz \cite{Hur1}.
Using the infinite wedge space formalism, Okounkov \cite{Ok1} showed that the generating function for double Hurwitz numbers can be embedded
in a solution to the 2-Toda hierarchy. We will see that this also follows from the fact that the generating function for double Hurwitz
numbers is a specialization of the generating series $B$ described above. Note that Orlov \cite{Orlov1} has also given a proof of this
result that is similar to ours although from a different point of view.

For $\al, \be \in \cP$, $|\al| = |\be| = d$ and $g \geq 0$ let $r_{\al,\be}^g = \ell(\al) + \ell(\be) + 2g - 2$. The Hurwitz number $H_{\al,\be}^g$
is defined by
	\[ H_{\al,\be}^g := \frac{1}{d!} |\mbox{Aut} \al| |\mbox{Aut} \be| b_{\al,\be}^{a_1, a_2, \cdots}, \]
where $a_i = 1$ for $1 \leq i \leq r_{\al,\be}^g$ and $a_i = 0$ for $i > r_{\al,\be}^g$. The (disconnected) double Hurwitz series is then
	\[ H := \sum_{\substack{\al,\be \in \cP \\ |\al| = |\be| = d \geq 1, \\ g \ge 0}} \frac{ H_{\al,\be}^g }{ |\mbox{Aut} \al| |\mbox{Aut} \be| }
																																		p_\al q_\be \frac{t^{r_{\al,\be}^g}}{r_{\al,\be}^g !}. \]
After doing some algebraic manipulations, it can be shown (see \cite{GJ1} for details) that
	\[ H = \left. B \right|_{a = b = 1, \,\, e_k(u_1,u_2,\cdots) = \frac{t^k}{k!} \,\, k \geq 1} = \left. \Phi_0 \right|_{a = b = 1, \,\, y_j = e^{jt}, \,\, j \in \bbZ}, \]
where the $e_k$ are the elementary symmetric functions.
Thus, we see that the double Hurwitz series can be embedded in a solution to the 2-Toda hierarchy. This fact was used in \cite{Ok1} to prove a
conjecture of Pandharipande\cite{Pand1} concerning the simple Hurwitz numbers.



We now give an example of a content-type series arising in the study of random partitions. We begin by defining a function $\cM$ on the set of partitions.
For $\la \in \cP$,
	\[ \cM(\la) = \frac{1}{Z} s_\la(\bfp) s_\la(\bfq) \]
where $Z = \sum_{\la \in \cP} s_\la(\bfp) s_\la(\bfq)$. The function $\cM$, considered as a probability measure on partitions, is called the Schur measure
and was introduced by Okounkov in \cite{Ok2}. The Schur measure can be thought of as a generalization of the $z$-measure introduced by Borodin and Olshanski
(see \cite{BO1} for example) and as such contains as specializations or limiting cases a variety of other probability models\cite{Bor1}. One of the results
in \cite{Ok2} is that the correlation functions of the Schur measure satisfy the 2-Toda hierarchy in the parameters of the measure. Here we show that this
also follows from Theorem~\ref{diagonalsolutions}.

For any partition $\la$, let $\fS(\la) = \{\la_i - i\}_{i \geq 1}$. This can be thought of as the sequence of contents of the rightmost cell in each
row of $\la$ where we append a countable number of parts of size 0 to $\la$. Note that $\la$ is uniquely recoverable from $\fS(\la)$.
For any $X \subseteq \bbZ$, we define
	\[ \rho(X) = \sum_{\substack{\la \in \cP \\ X \subseteq \fS(\la)}} \cM(\la). \]
The $\rho(X)$ are the correlators of the Schur measure. If $X = \{x_1, x_2, \cdots\}$ then for $n \in \bbZ$ we use $X - n$ to denote the set
$\{x_1 - n, x_2 - n, \cdots\}$.

\begin{pro}
	For any $X \subseteq \bbZ$, the sequence $\{Z \rho(X - n) \}_{n \in \bbZ}$ is a solution to the 2-Toda hierarchy.
\end{pro}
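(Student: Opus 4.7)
My plan is to apply Theorem~\ref{diagonalsolutions} directly. Writing
\[ Z \rho(X - n) = \sum_{\la \in \cP} g_\la(n) s_\la(\bfp) s_\la(\bfq), \quad \text{where } g_\la(n) := [\, X - n \subseteq \fS(\la) \,] \]
(with $[\,\cdot\,]$ denoting the Iverson bracket), the problem reduces to verifying
\[ g_{\la\uparrow i}(n) \, g_{\mu\downarrow j}(m) = g_\la(n-1) \, g_\mu(m+1) \]
for all $\la,\mu \in \cP$, $n,m \in \bbZ$, and $i,j \geq 1$ satisfying $|\la|+|\mu| = |\la\uparrow i|+|\mu\downarrow j|+n-m-1$. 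Because each $g_\la(n) \in \{0,1\}$, this is a purely set-theoretic statement about containment of $X$ in certain shifts of $\fS(\cdot)$.

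The first step is to describe how $\fS$ transforms under $\uparrow$ and $\downarrow$. Using the definitions together with the fact that $\la_k-k$ is strictly decreasing in $k$, I expect to obtain
\[ \fS(\la\uparrow i) = (\fS(\la) - 1) \sqcup \{d_1 - 1\}, \qquad \fS(\mu\downarrow j) = (\fS(\mu) + 1) \setminus \{-d_2\}, \]
where $d_1 = |\la\uparrow i|-|\la| = i - u_i(\la) - 1$ and $d_2 = |\mu\downarrow j|-|\mu| = j - \mu_j - 1$. The disjointness in the first identity amounts to $d_1 \notin \fS(\la)$, which follows from the defining inequalities $\la_{u_i(\la)} \geq i > \la_{u_i(\la)+1}$.

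The second step is to translate these into the shifted sets that appear in the $g$'s:
\[ \fS(\la\uparrow i) + n = (\fS(\la) + (n-1)) \cup \{y\}, \qquad \fS(\mu\downarrow j) + m = (\fS(\mu) + (m+1)) \setminus \{y'\}, \]
where $y = n + d_1 - 1$ and $y' = m - d_2$. The size constraint rearranges to $d_1 + d_2 = m - n + 1$, which is exactly the condition $y = y'$. Thus the single element \emph{added} in passing from $\la$ to $\la\uparrow i$ coincides, after shifting, with the single element \emph{removed} in passing from $\mu$ to $\mu\downarrow j$.

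Given this alignment, the final step is a short case analysis. If the right-hand side equals $1$, then $X \subseteq \fS(\la)+(n-1)$, and the disjointness from step~1 gives $y \notin \fS(\la)+(n-1)$, hence $y \notin X$; both containments then propagate to $\fS(\la\uparrow i)+n$ and $\fS(\mu\downarrow j)+m$, making the left-hand side equal to $1$. Conversely, if the left-hand side equals $1$, the $\mu\downarrow j$ factor already forces $y \notin X$ and $X \subseteq \fS(\mu)+(m+1)$, and then the $\la\uparrow i$ factor forces $X \subseteq \fS(\la)+(n-1)$. The main technical obstacle is really confined to step~1 --- getting the explicit formulas for $\fS(\la\uparrow i)$ and $\fS(\mu\downarrow j)$ correct together with the crucial disjointness claim; once those are in hand, the identity falls out in a few lines of set theory and there is no need to return to the analytic side of the 2-Toda hierarchy.
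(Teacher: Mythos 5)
Your proposal is correct and follows essentially the same route as the paper: reduce to Theorem~\ref{diagonalsolutions}, observe that the coefficients are $0$/$1$ indicators of the containment $X - n \subseteq \fS(\la)$, and track how $\fS$ changes under $\la \mapsto \la\uparrow i$ and $\mu \mapsto \mu\downarrow j$, with the size constraint forcing the added element $i - u_i(\la) - 2$ and the removed element $\mu_j - j + 1$ to coincide after shifting. Your set-theoretic packaging is in fact slightly more complete than the paper's element-by-element check, since you explicitly verify both implications of the $0$/$1$-valued identity rather than only the direction from $g_\la(n-1)g_\mu(m+1)=1$.
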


\begin{proof}
	First, notice that
		\[ \tau_n = Z \rho(X - n) = \sum_{\substack{\la \in \cP \\ X - n \subseteq \fS(\la)}} s_\la(\bfp) s_\la(\bfq). \]
	
	Suppose $\la, \mu \in \cP$ and $n, m \in \bbZ$ are such that
		\[ X - n + 1 \subseteq \fS(\la), \]
	and
		\[ X - m - 1 \subseteq \fS(\mu). \]
	Further, suppose that $i, j \in \bbZ$ are such that $|\la| + |\mu| = |\la \uparrow i| + |\mu \downarrow j| + n - m - 1$. We will now show
	that
		\[ X - n \subseteq \fS(\la \uparrow i) \]
	and
		\[ X - m \subseteq \fS(\mu \downarrow j) \]
	which, by Theorem~\ref{diagonalsolutions}, will prove the proposition.
	
	First, recall that $\la \uparrow i = (\la_1 - 1, \cdots, \la_{u_i(\la)} - 1, i - 1, \la_{u_i(\la) + 1}, \cdots)$ where $u_i(\la)$ is the
	unique integer such that $\la_{u_i(\la)} \geq i > \la_{u_i(\la) + 1}$ and that $\mu \downarrow j = (\mu_1 + 1, \cdots, \mu_{j-1} + 1, \mu_{j+1}, \cdots)$.
	Thus, we have $|\la| - |\la \uparrow i| = u_i(\la) + 1 - i$ and $|\mu| - |\mu \downarrow j| = \mu_j - j + 1$ so that the constraint
	$|\la| + |\mu| = |\la \uparrow i| + |\mu \downarrow j| + n - m - 1$ becomes $u_i(\la) - i + \mu_j - j + 2 = n - m - 1$.
	
	Now, for any $x \in X$, we know that
		\[ x - n + 1 = \la_t - t \]
	for some positive integer $t$ (recall that $\la_t = 0$ if $t > \ell(\la)$). If $t \leq u_i(\la)$ then
		\[ x - n = \la_t - 1 - t = (\la \uparrow i)_t - t. \]
	Similarly, if $t > u_i(\la)$ then
		\[ x - n = \la_t - (t + 1) = (\la \uparrow i)_{t+1} - (t + 1). \]
	In particular, notice that this implies there is no $x \in X$ such that
		\[ x - n = (\la \uparrow i)_{u_i(\la) + 1} - (u_i(\la) + 1) = i - u_i(\la) - 2. \]
	
	So far we have shown that $X - n \subseteq \fS(\la \uparrow i)$ and so now we must show that $X - m \subseteq \fS(\mu \downarrow j)$.
	
	For any $x \in X$, we know that
		\[ x - m - 1 = \mu_t - t \]
	for some positive integer $t$. If $t < j$ then
		\[ x - m = (\mu_t + 1) - t = (\mu \downarrow j)_t - t. \]
	Similarly, if $t > j$ then
		\[ x - m = \mu_t - (t - 1) = (\mu \downarrow j)_{t-1} - (t-1). \]
	If $t = j$ then since $x - m - 1 = \mu_j - j$ and $u_i(\la) - i + \mu_j - j + 2 = n - m - 1$ we have
		\[ x - n = m + 1 - n + \mu_j - j = i - u_i(\la) - 2, \]
	a contradiction and hence $X - m \subseteq \fS(\mu \downarrow j)$.
\end{proof}

For the last example we look at a matrix integral that arises in mathematical physics and, more recently, in a combinatorial context\cite{GGN1,GGN2}.
Consider the integral
	\[ I_n := \int_{U(n)} e^{\mbox{Tr}(XUYU^*)}dU, \]
where $U(n)$ is the group of $n$ by $n$ unitary matrices, $dU$ is the Haar measure on $U(n)$ and $X$ and $Y$ are diagonal matrices.

The integral $I_n$ appeared recently in the context of a combinatorial problem involving enumeration of certan restricted factorizations in the symmetric group.
This problem is called the monotone double Hurwitz problem and is related to the double Hurwitz problem discussed earlier. We refer to
\cite{GGN1} for an analytic treatment of $I_n$ and \cite{GGN2} for a solution to the corresponding enumeration problem.

If we let $p_k = \mbox{Tr}(X^k)$ and $q_k = \mbox{Tr}(Y^k)$ then using the character expansion method \cite{Orlov1} we have
	\[ I_n = \sum_{\substack{\la \in \cP, \\ \ell(\la) \leq n}} \frac{s_\la(\bfp) s_\la(\bfq)}{\prod_{\square \in \la} (n + c(\square))}. \]
Given a partition $\la$, the cell with the smallest content is in the first column and the last row and the content of this cell is $1 - \ell(\la)$.
From this it is easy to see that $\la \in \cP$ is such that $\ell(\la) \leq n$ if and only if for all $\square \in \la$, $c(\square) > -n$. Now, consider
the series $\tilde{I_n} = \Phi_n$ where we set $a = 1, b = y_0^{-1/2}$ and $y_i = \frac{1}{i}$ if $i > 0$ and $y_i = 0$ if $i \leq 0$.
Note that we first make the substitution $b = y_0^{-1/2}$ and each of the coefficients in the resulting series is a monomial in the $y_i$ and so we may set $y_0 = 0$.
We have
	\[ \tilde{I_n} = \tilde{\ta_n} \sum_{\la \in \cP} \tilde{Y_n}(\la) s_\la(\bfp) s_\la(\bfq), \]
where
	\[ \tilde{\ta_n} = \left( \prod_{i = 1}^{n-1} i! \right)^{-1}. \]
Also,
	\[ \tilde{Y_n}(\la) = \prod_{\square \in \la} y_{n + c(\square)}, \]
and so $\tilde{Y_n}(\la) = 0$ unless $\forall \square \in \la$, $n + c(\square) > 0$ or $c(\square) > -n$. If $\tilde{Y_n}(\la) \not = 0$ then
	\[ \tilde{Y_n}(\la) = \prod_{\square \in \la} y_{n + c(\square)} = \prod_{\square \in \la} \frac{1}{n + c(\square)}. \]
Thus,
	\[ \tilde{I_n} = \left( \prod_{i = 1}^{n-1} i! \right)^{-1} I_n, \]
and hence, via Theorem~\ref{contentsolution}, $\{ \left( \prod_{i = 1}^{n-1} i! \right)^{-1} I_n \}_{n \in \bbZ}$ is a solution to the 2-Toda hierarchy
where $I_n = 0$ if $n < 0$. Note that a similar result appears in \cite{GGN1} with a slightly different proof. Also, this result appears in
\cite{Orlov1} from a different perspective and can be thought of as a generalization of a result of Zinn-Justin \cite{ZJ1}.

\bibliographystyle{plain}
\bibliography{TodaSolution}

\end{document}